\documentclass[10pt,a4paper,reqno]{amsart}

\usepackage{amsmath}
\usepackage{amsfonts}
\usepackage{amssymb}
\usepackage{amsthm}
\usepackage[shortlabels]{enumitem}
\usepackage{graphicx}
\usepackage{color}
\usepackage{dsfont}
\usepackage[latin1]{inputenc}
\usepackage{MnSymbol}
\usepackage{enumitem}
\usepackage{extpfeil}
\usepackage{mathtools}
\usepackage{url}
\usepackage[margin=1.2in]{geometry}
\usepackage{fancyhdr}
\usepackage[all,cmtip]{xy}

\fancyhf{}
\fancyhead[C]{\thepage}
\pagestyle{fancy}
\headheight = 13pt

\numberwithin{equation}{section}

\title{Complex hypersurfaces in direct products of Riemann surfaces}

\author{Claudio Llosa Isenrich}

\address{Faculty of Mathematics, Karlsruhe Institute of Technology, Englerstr. 2, 76131 Karlsruhe, Germany}

\email{claudio.llosa@kit.edu}
\thanks{This work was supported by a public grant as part of the FMJH}
\keywords{Complex hypersurfaces, Riemann surfaces, K\"ahler groups, Subdirect products}
\subjclass[2010]{32J27, (32Q15, 20F65, 20J05)}

\begin{document}

\newcommand{\AAA}{{\mathds A}}
\newcommand{\CC}{{\mathds C}}
\newcommand{\PP}{{\mathbf P}}
\newcommand{\QQ}{{\mathds Q}}
\newcommand{\RR}{{\mathds R}}
\newcommand{\NN}{{\mathds N}}
\newcommand{\ZZ}{{\mathds Z}}
\newcommand{\del}{{\partial}}
\newcommand{\one}{{\mathds {1}}}
\newcommand{\ord}{{\mathcal {O}}}
\newcommand{\ii}{{\mathds {i}}}
\newcommand{\vol}{{\mathrm {vol}}}
\newcommand{\eps}{{\epsilon}}
\def\Mod{{\rm{Mod}}}
\def\C{{\mathds C}}
\def\D{\rm D}
\def\S{\Sigma}
\def\F{{\mathds F}}
\def\FF{\mathcal F}
\def\aut{{\rm{Aut}}}
\def\inn{{\rm{Inn}}}
\def\out{{\rm{Out}}}
\def\isom{{\rm{Isom}}}
\def\mcg{{\rm{MCG}}}
\def\ker{{\rm{ker}}}
\def\im{{\rm{im}}}
\def\dim{{\rm{dim}}}
\def\alb{{\rm{alb}}}
\def\G{\Gamma}
\def\a{\alpha}
\def\g{\gamma}
\def\L{\Lambda}
\def\Z{{\mathds{Z}}}
\def\H{{\mathds{H}}}
\def\nn{{\bf N}}
\newcommand{\mm}{{\underline{m}}}

\theoremstyle{plain}
\newtheorem{theorem}{Theorem}[section]
\newtheorem{acknowledgement}[theorem]{Acknowledgement}
\newtheorem{claim}[theorem]{Claim}
\newtheorem{conjecture}[theorem]{Conjecture}
\newtheorem{corollary}[theorem]{Corollary}
\newtheorem{exercise}[theorem]{Exercise}
\newtheorem{lemma}[theorem]{Lemma}
\newtheorem{proposition}[theorem]{Proposition}
\newtheorem{question}{Question}
\newtheorem*{question1}{Question \ref{qnIntroDelGro}}
\newtheorem*{question2}{Question \ref{qnIntroKGsFinProps}}
\newtheorem*{question3}{Question \ref{qnIntroSuciuFinPres}}
\newtheorem*{question*}{Question}
\newtheorem{addendum}[theorem]{Addendum}

\newtheorem{keytheorem}{Theorem}
\renewcommand{\thekeytheorem}{\Alph{keytheorem}}

\theoremstyle{definition}
\newtheorem{remark}[theorem]{Remark}
\newtheorem*{acknowledgements*}{Acknowledgements}
\newtheorem{example}[theorem]{Example}
\newtheorem{definition}[theorem]{Definition}
\newtheorem*{notation*}{Notation}
\newtheorem*{definition*}{Definition}
\newtheorem*{convention*}{Convention}

\renewcommand{\proofname}{Proof}

\begin{abstract}
We study smooth complex hypersurfaces in direct products of closed hyperbolic Riemann surfaces and give a classification in terms of their fundamental groups. This answers a question of Delzant and Gromov on subvarieties of products of Riemann surfaces in the smooth codimension one case. We also answer Delzant and Gromov's question of which subgroups of a direct product of surface groups are K\"ahler for two classes: 
\begin{itemize}
\item subgroups of direct products of three surface groups; and 
\item subgroups arising as kernel of a homomorphism from the product of surface groups to $\ZZ^3$. 
\end{itemize}
These results will be a consequence of answering the more general question of which subgroups of a direct product of surface groups are the image of a homomorphism, which is induced by a holomorphic map, for the same two classes. This provides new constraints on K\"ahler groups.
\end{abstract}

\maketitle

\section{Introduction}
A \textit{K\"ahler group} is a group that can be realised as fundamental group of a compact K\"ahler manifold. 

\vspace{.2cm}
\noindent {\bf Convention.} Throughout this work $S_g$ will denote a closed orientable surface of genus $g\geq 2$ and $\G_g=\pi_1 (S_g)$ its fundamental group. Furthermore a \textit{surface group} will always be a group isomorphic to $\G_g$ for some $g \geq 2$.
\vspace{.2cm}

K\"ahler groups have attracted much interest over the last decades and have been studied from many different points of view. An important motivation for studying them is that they are closely linked to the study of the topology of smooth complex projective varieties. Historically, a key technique for understanding K\"ahler groups is through their homomorphisms onto surface groups. For some examples of how surface groups are used in the study of K\"ahler groups, as well as for a general background on K\"ahler groups, we refer the reader to \cite{ABCKT-95} (and also \cite{BisMj-17, Bur-10} for more recent developments). 

A central objective of this work will be to develop new constraints on homomorphisms from K\"ahler groups onto surface groups by studying complex hypersurfacces in direct products of Riemann surfaces. More precisely, we will address the following questions raised by Delzant and Gromov in their fundamental work \cite{DelGro-05} on cuts in K\"ahler groups:

\begin{question}[{Delzant--Gromov \cite{DelGro-05}}]
 Which subgroups of direct products of surface groups are K\"ahler?
 \label{qnDelGro1}
\end{question}

\begin{question}[{Delzant--Gromov \cite{DelGro-05}}]
 Given a subgroup $G\leq \pi_1 (S_{g_1})\times \dots \times \pi_1 (S_{g_r})$. When does there exist an algebraic variety $V\subset S_{g_1}\times \cdots \times S_{g_r}$ of a given dimension $n$ such that the image of the fundamental group of $V$ is $G$?
 \label{qnDelGro2}
\end{question}

Question \ref{qnDelGro2} can be seen as a more general version of Question \ref{qnDelGro1}. This is particularly apparent from the following group theoretic reformulation:
\begin{question}
 When is a subgroup $G\leq \pi_1(S_{g_1})\times \dots \times \pi_1(S_{g_r})$ the image of a homomorphism $\pi_1(X)\to \pi_1(S_{g_1})\times \dots \times \pi_1(S_{g_r})$, which is induced by a holomorphic map $X\to S_{g_1}\times \dots \times S_{g_r}$ from a compact K\"ahler manifold $X$?
\label{qnDelGro3}
\end{question}
Answers to these questions in concrete situations provide new constraints on K\"ahler groups and can thus have interesting applications. Indeed, one such application of Theorem \ref{thm3FactorHomVer} of this work has been provided recently by the author and Py \cite{LloPy-21}. They apply it to obtain constraints on Kodaira fibrations admitting more than two fiberings, thereby making progress on Salter's and Catanese's question if such Kodaira fibrations can exist \cite{Sal-15,Cat-17}.

The work of Delzant and Gromov \cite{DelGro-05} gives criteria for when a K\"ahler group admits a homomorphism to a direct product of surface groups. These results have been extended by work of Py \cite{Py-13} and Delzant and Py \cite{DelPy-16}. A key consequence of their works is that many actions of K\"ahler groups on CAT(0) cube complexes factor through homomorphisms to direct products of surface groups. Combined with the important role that CAT(0) cube complexes have played in recent advances in Geometric group theory and low-dimensional topology (e.g. \cite{Ago-13}), this motivates Delzant--Gromov's questions.

The first non-trivial examples of K\"ahler subgroups of direct products of surface groups have been constructed by Dimca, Papadima and Suciu \cite{DimPapSuc-09-II}  with the purpose of showing that there is a K\"ahler group which does not have a classifying space which is a quasi-projective variety. They arise as fundamental groups of generic fibres of holomorphic maps from a direct product of Riemann surfaces onto an elliptic curve, which restrict to ramified coverings of degree two on the factors. These examples have been generalized by the author \cite{Llo-16-II} and by Biswas, Mj and Pancholi \cite{BisMjPan-14}. All of these examples are fundamental groups of smooth complex hypersurfaces in direct products of closed Riemann surfaces. More general classes of K\"ahler subgroups of direct products of surface groups have been constructed by the author from holomorphic maps onto higher dimensional tori \cite{Llo-17}. They include examples coming from subvarieties of all possible codimensions. On the other hand K\"ahler subgroups of direct products of surface groups must satisfy strong constraints and the same remains true for subgroups arising as images of homomorphisms which are induced by holomorphic maps  \cite{Llo-17}. We will provide more details on these results in Section \ref{secBackMot}.

The combination of the diversity of examples and constraints reveals the subtle nature that a complete answer to Delzant--Gromov's question will have to have.  However, as discussed above, solutions even in specific cases provide new tools for studying K\"ahler groups, enabling interesting applications. This work is thus concerned with finding natural situations in which complete answers can be obtained. For this we combine insights from previous works, with Albanese maps, and a careful analysis of complex hypersurfaces in direct products of closed Riemann surfaces. 

Our first result is an answer to Question \ref{qnDelGro3} for direct products of three surface groups.

\begin{definition*}
For a direct product $G_1 \times \dots \times G_r$ of groups, denote by $p_i: G_1\times \dots \times G_r\to G_i$ the projection onto the $i$-th factor.  A subgroup $H\leq G_1\times \dots \times G_r$ is called
\begin{itemize}
\item \textit{subdirect} if $p_i(H)=G_i$ for $1\leq i\leq r$, and
\item \textit{full} if $H\cap G_i := H \cap \left(1 \times \dots \times 1 \times G_i \times 1 \times \dots \times 1\right)$ is non-trivial for $1\leq i \leq r$.
\end{itemize}
\end{definition*}

\begin{theorem}
\label{thm3FactorHomVer}
Let $G=\pi_1(X)$ be the fundamental group of a compact K\"ahler manifold $X$, and let $\phi: G\to \G_{g_1}\times \G_{g_2}\times \G_{g_3}$ be a homomorphism with finitely presented full subdirect image $\overline{G}:=\phi(G)$ of infinite index. Assume that $\ker (p_i\circ \phi)$ is finitely generated for $1\leq i\leq 3$.
 
 Then there are finite index subgroups $\G_{\g_i}\leq \G_{g_i}$, a complex elliptic curve $E$, and a holomorphic map $$f=\sum_{i=1}^3 f_i : S_{\g_1}\times S_{\g_2}\times S_{\g_3}\to E,$$ induced by branched holomorphic coverings $f_i: S_{\g_i}\to E$, such that $\overline{G}_0=\ker(f_{\ast}) \cong \pi_1 (H)\leq \overline{G}$ is a finite index subgroup, where $H$ is the smooth generic fibre of $f$ and $f_{\ast}:\G_{\g_1}\times \G_{\g_2}\times \G_{\g_3}\to \pi_1(E)$ is the induced map on fundamental groups.
\end{theorem}

We emphasize that the condition that $\ker(p_i\circ \phi)$ is finitely generated in Theorem \ref{thm3FactorHomVer} implies that the homomorphism $\phi$ is induced by a holomorphic map, and, conversely, that every homomorphism to a surface group induced by a holomorphic map will have finitely generated kernel, after possibly passing to a finite ramified cover. Thus, our result does really provide an answer to Question \ref{qnDelGro3} for direct products of three surface groups.

\begin{remark}
Theorem \ref{thm3FactorHomVer} also provides constraints on homomorphisms to products of more than three surface groups satisfying the remaining assumptions of the theorem. Indeed, we can apply it to every composition of such a homomorphism with a projection to three of the surface group factors.
\end{remark}

We also give a description of all possible images of homomorphisms with $\phi$ as in Theorem \ref{thm3FactorHomVer} when the image is not a full subdirect product (see Theorem \ref{thm3FactorHom2}). However, in this case the homomorphism will not always be induced by a holomorphic map. 

As a consequence of Theorem \ref{thm3FactorHom2} we obtain the following answer to Question \ref{qnDelGro1} in the three factor case.

\begin{corollary}
\label{thm3Factor}Let $G=\pi_1 (X) \leq \G_{g_1}\times \G_{g_2} \times \G_{g_3}$, for $X$ a compact K\"ahler manifold. Then there is a finite index subgroup $G_0\leq G$ such that 
\begin{enumerate}
 \item either $G_0\cong \ZZ^{2k}\times \G_{h_1}\times \dots \times \G_{h_s}$ for $h_1,\dots h_s\geq 2$ and $0\leq 2k+s\leq 3$;
 \item or $G_0$ is the kernel of an epimorphism $\psi: \G_{\g_1}\times \G_{\g_2}\times \G_{\g_3}\to \ZZ^2$ which is induced by a surjective holomorphic map $f=\sum_{i=1}^3  f_i : S_{\g_1}\times S_{\g_2}\times S_{\g_3} \to E$ with the same properties as the map $f$ in Theorem \ref{thm3FactorHomVer}.
\end{enumerate}
Conversely, every group which satisfies one of the conditions (1) and (2) is K\"ahler.
\end{corollary}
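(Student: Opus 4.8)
The plan is to derive the corollary by applying Theorem~\ref{thm3FactorHomVer} (and, for images that are not full subdirect products, its companion Theorem~\ref{thm3FactorHom2}) to the inclusion homomorphism $\iota\colon G\hookrightarrow\G_{g_1}\times\G_{g_2}\times\G_{g_3}$, and then to verify the converse by exhibiting each of (1)--(5) as the fundamental group of a compact K\"ahler manifold. As $X$ is compact K\"ahler, $G=\pi_1 X$ is finitely presented, so the hypotheses of Theorem~\ref{thm3FactorHomVer} that still require work are only that, after passing to a finite-index subgroup, the relevant coordinate projections be surjective with finitely generated kernel.

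I would first invoke the structural fact recalled in the introduction, that a K\"ahler subgroup of a product of surface groups is a subdirect product of a free abelian group and finitely many surface groups. Passing to a finite-index subgroup $G_0\leq G$, I may then assume each image $p_i(G_0)\leq\G_{g_i}$ is trivial, infinite cyclic, or of finite index (hence a surface group), and I discard the factors on which it is trivial. Let $b$ be the number of factors on which $G_0$ has finite-index image; this presents $G_0$ as a full subdirect product in $b$ surface factors, together with a possible free abelian direct factor coming from the infinite cyclic images. If $b\leq 2$, the intersections $G_0\cap\G_{g_i}$ are normal in a surface group and hence trivial or of finite index, and together with subdirectness this forces $G_0$ to be virtually a surface group, a product $\G_{h_1}\times\G_{h_2}$, or a product $\ZZ^2\times\G_h$. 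In the first of these, $G$ is torsion-free and K\"ahler, so it is itself an orientable surface group $\pi_1 R$ and we are in case (1); the other two are cases (2) and (3).

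If $b=3$, then $G_0$ is a finitely presented full subdirect product of three surface groups, and by the finiteness-property theory of Bridson--Howie--Miller--Short (\cite{BriHowMilSho-09,BriHowMilSho-13}) finite presentability forces $G_0$, after a further finite-index passage, to virtually surject onto each pair of factors, so that the projections $p_i|_{G_0}$ have finitely generated kernel. Theorem~\ref{thm3FactorHomVer} now applies and shows that $G_0$ is either virtually $\G_{h_1}\times\G_{h_2}\times\G_{h_3}$, which is case (4), or the kernel of an epimorphism $\psi\colon\G_{\g_1}\times\G_{\g_2}\times\G_{\g_3}\to\pi_1 E\cong\ZZ^2$ induced by a holomorphic map $f=\sum_{i=1}^3 f_i$ to an elliptic curve, which is case (5); the possibility that $\iota$ fails to be full subdirect because some $p_i(G)$ is free of infinite index is treated identically via Theorem~\ref{thm3FactorHom2}. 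For the converse, each of (1)--(4) is the fundamental group of $R$, $S_{h_1}\times S_{h_2}$, $E\times S_h$, or $S_{h_1}\times S_{h_2}\times S_{h_3}$, all compact K\"ahler, and in case (5) the generic fibre $H$ of $f$ is a smooth complex hypersurface in the compact K\"ahler manifold $S_{\g_1}\times S_{\g_2}\times S_{\g_3}$, hence itself compact K\"ahler, with $\pi_1 H\cong\ker\psi=G_0$ by the short exact sequence of Theorem~\ref{thm3FactorHomVer}; thus $G_0$ is K\"ahler.

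The main obstacle lies in the case $b=3$: one must extract finite generation of the projection kernels from finite presentability together with the K\"ahler hypothesis, and rule out an abelian quotient $\ZZ^{2m}$ with $m\geq 2$, so that the quotient is precisely the fundamental group of an elliptic curve and the inducing map is genuinely holomorphic. The delicate point is the bookkeeping that distinguishes a genuinely irreducible three-factor configuration, governed by Theorem~\ref{thm3FactorHomVer}, from one that secretly splits off a $\ZZ^2$ or a surface direct factor and hence belongs to cases (2)--(3); this is exactly where the finiteness-property dichotomy of Bridson--Howie--Miller--Short must be combined with the K\"ahler structure theory.
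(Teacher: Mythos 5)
Your strategy is genuinely different from the paper's: the paper deduces the corollary in two lines by applying Theorem~\ref{thm3FactorHom2} to the inclusion (K\"ahler groups are finitely presented, and evenness of the first Betti number eliminates the odd-rank cases), whereas you try to rebuild the case analysis from the introduction's structural fact plus Bridson--Howie--Miller--Short and only then invoke Theorem~\ref{thm3FactorHomVer}. Your rebuild breaks at the very first reduction. You claim that after passing to a finite-index subgroup $G_0\leq G$ every image $p_i(G_0)\leq \G_{g_i}$ is trivial, infinite cyclic, or of finite index. This is false: let $\phi\colon \G_{g_1}\rightarrow F$ be an epimorphism onto a non-abelian free group embedded as an infinite-index subgroup $F\leq \G_{g_2}$, and set $G=\{(a,\phi(a),b) : a\in\G_{g_1},\ b\in\G_{g_3}\}$. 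Then $G\cong \G_{g_1}\times\G_{g_3}$ is a K\"ahler subgroup of $\G_{g_1}\times\G_{g_2}\times\G_{g_3}$, yet $p_2(G)$ is non-abelian free of infinite index, and so is $p_2(G_0)$ for \emph{every} finite-index $G_0\leq G$, since $p_2(G_0)$ has finite index in $p_2(G)$. Passing to finite index can never remove such a factor, and the structural fact you cite does not help: it re-embeds $G$ as a subdirect product of a free abelian group and some (new) surface groups; it says nothing about the images of $G$ in the \emph{given} factors $\G_{g_i}$. These free-image configurations really occur (the example above is case (2) of the corollary), and they are exactly what cases (1)--(4) of Theorem~\ref{thm3FactorHom1}, i.e.\ \cite[Theorem 6.13]{Llo-17}, absorb using K\"ahler fibration theory; your case analysis never reaches them.

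The second gap is one you flag yourself but do not close. In your $b=3$ case, Theorem~\ref{thmBHMS} gives that $G_0$ virtually surjects onto pairs, but the inference ``so that the projections $p_i|_{G_0}$ have finitely generated kernel'' is a non sequitur: virtual surjection onto pairs does not formally yield finite generation of $G_0\cap(\G_{g_j}\times\G_{g_k})$, and this is precisely the hypothesis of Theorem~\ref{thm3FactorHomVer} you must verify. One can in fact prove it, but only with additional input, e.g.\ passing to a coabelian finite-index subgroup with factor-surjective defining map via \cite[Corollary 3.6]{Kuc-14} and then running a BNS-type argument; your closing paragraph, which names this as ``the main obstacle,'' concedes that the hypotheses of the theorem you apply remain unverified. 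A related flaw infects your $b\leq 2$ analysis: the dichotomy ``normal in a surface group, hence trivial or of finite index'' is valid only for \emph{finitely generated} normal subgroups (kernels of maps $\G_h\to\ZZ$ are normal, non-trivial and of infinite index), and you have not shown $G_0\cap\G_{g_i}$ is finitely generated. The paper's route through Theorem~\ref{thm3FactorHom2} avoids having to prove any of this at the level of the corollary, since the delicate points are packaged inside Theorem~\ref{thm3FactorHom1} and its consequences in \cite{Llo-17}. Your converse direction (that each of (1)--(5) is K\"ahler) is correct and agrees with the paper.
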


We remark that Theorem \ref{thm3FactorHomVer} and Corollary \ref{thm3Factor} will hold for any choice of compact K\"ahler manifold $X$ with $G=\pi_1 (X)$. However, the complex structures on $E$ and $S_{\g_i}$ obtained in the proof will depend on the complex structure of $X$, since we will make use of the fact that there is a holomorphic map $X \to S_{g_1}\times S_{g_2} \times S_{g_3}$ which realises the homomorphism $G \to \G_{g_1}\times \G_{g_2}\times \G_{g_3}$. Both results will be consequences of the more general criterion provided by Theorem \ref{thmMainTheorem} in Section \ref{secMainRes}. Theorem \ref{thmMainTheorem} also allows us to classify connected smooth complex hypersurfaces in a direct product of $r$ closed Riemann surfaces in terms of the image of their fundamental groups, thus providing a complete answer to Question \ref{qnDelGro2} for this case.

\begin{theorem}
 Let $X \subset S_{g_1} \times \dots \times S_{g_r}$ be a connected smooth complex hypersurface in a product of closed Riemann surfaces of genus $g_i\geq 2$. Then there are finite unramified covers $X_0\to X$ and $S_{\g_i}\to S_{g_i}$, and a holomorphic embedding $\iota: X_0  \hookrightarrow S_{\g_1}\times \dots \times S_{\g_r}$ such that one of the following holds:
 \begin{enumerate}
 \item  $\iota_{\ast}$ is surjective on fundamental groups
 \item $X_0$ is a direct product of $r-1$ Riemann surfaces
 \item there is $3\leq s \leq r$, an elliptic curve $E$, and surjective holomorphic maps $h_i: S_{\g_i}\to E$, $1\leq i \leq s$, such that $X_0=H \times S_{g_{s+1}}\times \dots \times S_{g_r}$ for $H$ the smooth generic fibre of $h=\sum _{i=1}^s h_i :  S_{\g_1}\times \dots \times S_{\g_s}\to E$. 
 \end{enumerate}
Moreover, if (3) holds, then $h$ induces a short exact sequence
 \[
  1 \to \pi_1 (H) \to \pi_1 (S_{\g_1})\times \dots \times \pi_1 (S_{\g_s})\to \pi_1 (E) \to 1.
 \] 
 \label{thmCxHyp}
\end{theorem}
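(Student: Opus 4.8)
The plan is to realise $X$ as a compact Kähler manifold and to feed the homomorphism induced by its inclusion into the general criterion of Theorem~\ref{thmMainTheorem}, reading off the geometric trichotomy from the output. Write $P=S_{g_1}\times\dots\times S_{g_r}$, let $\iota\colon X\hookrightarrow P$ be the inclusion, and let $q_i\colon P\to S_{g_i}$ and $\pi_{\hat\imath}\colon P\to\prod_{j\neq i}S_{g_j}$ denote the coordinate and forgetful projections. Since $X$ is a smooth connected hypersurface it is a compact Kähler manifold of complex dimension $r-1$, and $\iota$ induces $\iota_\ast\colon\pi_1 X\to\Gamma_{g_1}\times\dots\times\Gamma_{g_r}$ with image $\overline G=\iota_\ast(\pi_1 X)$. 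The whole argument is a matter of (i) arranging $\overline G$ to be a subdirect product, (ii) verifying the finiteness hypotheses of Theorem~\ref{thmMainTheorem}, and (iii) translating its algebraic conclusion back into the three geometric cases.

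First I would analyse the coordinate projections $f_i=q_i\circ\iota\colon X\to S_{g_i}$. As a holomorphic map from a compact connected Kähler manifold to a curve, each $f_i$ has image a subvariety of $S_{g_i}$, hence either a point or all of $S_{g_i}$. If some $f_i$ is constant then $X$ is contained in $\prod_{j\neq i}S_{g_j}\times\{\mathrm{pt}\}$, and comparing dimensions ($\dim X=r-1=\dim\prod_{j\neq i}S_{g_j}$) forces $X$ to be this entire product; this is exactly case (2). Hence I may assume every $f_i$ is surjective, so that $\overline G$ surjects up to finite index onto each factor $\Gamma_{g_i}$, i.e. $\overline G$ is virtually a subdirect product (the fullness condition being supplied by the nontrivial fibre groups). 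Passing to the finite covers corresponding to the finite-index subgroups produced along the way is what accounts for the covers $X_0\to X$ and $S_{\gamma_i}\to S_{g_i}$ in the statement.

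Next I would verify the key hypothesis, finite generation of the kernels of the projections $p_i\circ\iota_\ast=(f_i)_\ast\colon\pi_1 X\to\Gamma_{g_i}$. Replacing $f_i$ by the connected-fibre map obtained from its Stein factorisation, and passing to a finite cover to kill multiple fibres, the generic fibre of $f_i$ is a smooth projective variety, namely a hypersurface in $\prod_{j\neq i}S_{g_j}$ cut out by the restriction of the defining section of $X$, and therefore has finitely generated fundamental group. By the homotopy exact sequence of this fibration the kernel of $(f_i)_\ast$ is generated by the image of the fibre group, hence is finitely generated. This establishes the hypotheses of Theorem~\ref{thmMainTheorem} (the subdirect counterpart of Theorem~\ref{thm3FactorHomVer}), and I expect this verification---controlling the kernels uniformly while tracking the auxiliary finite covers and the effect of singular and multiple fibres---to be the main obstacle.

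Finally I would translate the conclusion of Theorem~\ref{thmMainTheorem} into the trichotomy, with the forgetful projections governing which alternative occurs. If every $\pi_{\hat\imath}\circ\iota$ is dominant the hypersurface is nondegenerate and $\iota_\ast$ is surjective up to finite index, giving case (1). If some $\pi_{\hat\imath}\circ\iota$ fails to be dominant then, by the matching of dimensions, a generic fibre $\{\mathrm{pt}\}\times S_{g_i}$ is contained in $X$, so $X=Y\times S_{g_i}$ splits off the factor $S_{g_i}$ and one recurses on the hypersurface $Y\subset\prod_{j\neq i}S_{g_j}$. The split-off curve factors assemble into $S_{g_{s+1}}\times\dots\times S_{g_r}$, while the remaining non-product core is, by Theorem~\ref{thmMainTheorem}, virtually the kernel of an epimorphism onto $\ZZ^{2m}$; because $X$ has codimension one the base of the corresponding fibration has dimension one, which forces $m=1$ and, together with the finite generation of the kernels, identifies the base as an elliptic curve $E$. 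Thus the core is the smooth generic fibre $H$ of a map $h=\sum_{i=1}^s h_i\colon S_{\gamma_1}\times\dots\times S_{\gamma_s}\to E$, yielding the decomposition $X_0=H\times S_{g_{s+1}}\times\dots\times S_{g_r}$ of case (3), and the associated fibration directly provides the short exact sequence $1\to\pi_1 H\to\pi_1 S_{\gamma_1}\times\dots\times\pi_1 S_{\gamma_s}\to\pi_1 E\to 1$.
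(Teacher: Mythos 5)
Your overall strategy---split off surface factors, pass to finite covers to make the image subdirect, then feed the inclusion into Theorem \ref{thmMainTheorem}---is the paper's strategy in outline, but you have misread what Theorem \ref{thmMainTheorem} requires and what it delivers, and this leaves a genuine gap. The hypotheses of Theorem \ref{thmMainTheorem} are: full subdirect image, realisation by a holomorphic map (free here, since the map is the inclusion), geometric surjection onto $(r-1)$-tuples, and---crucially---that the image is \emph{coabelian}, i.e.\ the kernel of an epimorphism onto some $\ZZ^k$; its \emph{conclusion} is the elliptic-curve fibration. You instead spend your effort verifying finite generation of the kernels of $(f_i)_\ast$, which is a hypothesis of Proposition \ref{propSurjPairs} and Theorem \ref{thm3FactorHomVer} (needed there to \emph{produce} a holomorphic map) but plays no role in Theorem \ref{thmMainTheorem}, and you never establish coabelianity at all; indeed your last paragraph treats ``virtually the kernel of an epimorphism onto $\ZZ^{2m}$'' as an \emph{output} of Theorem \ref{thmMainTheorem}, so the implication is running backwards. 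The paper closes exactly this gap with Kuckuck's theorem \cite[Corollary 3.6]{Kuc-14}: after splitting off factors the hypersurface geometrically surjects onto $(s-1)$-tuples, so the projections away from one factor are surjective with finite fibres, hence the image virtually surjects onto $(s-1)$-tuples; Kuckuck's result then produces finite index subgroups $\G_{\g_i}\leq \G_{g_i}$ and an epimorphism $\phi:\G_{\g_1}\times\dots\times\G_{\g_s}\to\ZZ^k$ whose kernel is a finite index subgroup of the image. Only then does the real dichotomy appear: $k=0$ gives case (1), while $k\geq 1$ makes Theorem \ref{thmMainTheorem} applicable and gives case (3). Your proposed dichotomy---case (1) exactly when all forgetful projections are dominant---cannot be correct: in case (3) the fibre $H$ of $h$ also surjects onto every $(s-1)$-tuple of factors, yet $\pi_1 H$ has infinite index in $\G_{\g_1}\times\dots\times\G_{\g_s}$.

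Two further points. First, your parenthetical claim that fullness is ``supplied by the nontrivial fibre groups'' is false in general: the $\pi_1$-image of an embedded hypersurface can meet a factor trivially, e.g.\ $X=R\times S_{g_3}\times\dots\times S_{g_r}$ with $R\hookrightarrow S_{g_1}\times S_{g_2}$ the graph of a branched covering; the paper's Lemma \ref{lemFull} is devoted to precisely this, showing via the universal property of subdirect products of limit groups that a non-full image forces the product structure of case (2). Second, even once Theorem \ref{thmMainTheorem} applies, it only asserts that the image of $X_0$ is a \emph{possibly singular} fibre of $h$; to conclude $X_0=H$ with $H$ the smooth generic fibre, as the statement demands, one must rule out $X_0$ being a singular fibre. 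The paper does this by observing that $h$ has isolated singularities and reduced fibres, so fibres over singular values are not smooth manifolds, whereas $X_0$ is smooth. Your proposal does not address this step.
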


Finally, the techniques used to prove Theorem \ref{thmMainTheorem} can be adapted to give a complete classification of K\"ahler subgroups of direct products of surface groups arising as kernels of homomorphisms to $\ZZ^3$, hence also answering Question \ref{qnDelGro1} for this case. We refer to Section \ref{secZ3} for the precise statement and results. 

\subsection*{Structure} In Section \ref{secBackMot} we will give some additional background and motivation for this work. In Section \ref{secMainRes} we will prove Theorem \ref{thmMainTheorem}, which is the main technical result of this work. We apply this result in Section \ref{sec3Fact} to prove Theorem \ref{thm3FactorHomVer} and Corollary \ref{thm3Factor} and in Section \ref{secCxHyp} to prove Theorem \ref{thmCxHyp}. In Section \ref{secZ3} we explain how the techniques used in the proof of Theorem can be applied to kernels of homomorphisms from direct products of surface groups onto $\ZZ^3$.

\begin{acknowledgements*}
This project was started following conversations with Thomas Delzant in which he suggested to me to use Albanese maps to study coabelian K\"ahler subdirect products of surface groups. I am very grateful to him for this stimulus and for the inspiring discussions. I would also like to thank Pierre Pansu and Pierre Py for helpful comments and discussions.
\end{acknowledgements*}

\section{Background}
\label{secBackMot}
When approaching Delzant--Gromov's questions, it is helpful to use our understanding of the nature of subgroups of direct products of surface groups from Geometric group theory. The work of Bridson, Howie, Miller and Short \cite{BriHowMilSho-09,BriHowMilSho-13} and other authors (e.g. \cite{Koc-10,Kuc-14}) shows that finiteness properties play a key role in this context. We say that a group has finiteness type $\mathcal{F}_k$ if it has a classifying CW-complex with finitely many cells of dimension $\leq k$. Note that type $\mathcal{F}_1$ is equivalent to being finitely generated, while type $\mathcal{F}_2$ is equivalent to being finitely presented. A subgroup of type $\mathcal{F}_r$ of a direct product of surface groups is virtually a direct product of finitely many free groups and surface groups  \cite{BriHowMilSho-09,BriHowMilSho-13}. Thus, all ``non-trivial'' subgroups of such a product must have exotic finiteness properties. Moreover, for groups which are not of type $\mathcal{F}_r$ stronger finiteness properties mean stronger constraints on the type of group. For more details we refer to \cite{BriHowMilSho-09,BriHowMilSho-13,Koc-10,Kuc-14}.

 As explained in the introduction, finding a complete answer to Delzant--Gromov's question is far from trivial. However, there are interesting subclasses of direct products of surface groups in which finding an answer seems more feasible. Indeed, a first class are the subgroups $G$ of type $\mathcal{F}_\infty$: Since any such $G$ is virtually a direct product of surface groups and free groups, one deduces readily that $G$ being K\"ahler is equivalent to $G$ being virtually a product $\ZZ^{2k}\times \pi_1 (S_{g_1})\times \dots \times \pi_1 (S_{g_s})$ for $k\geq 0$, $s\geq 0$ and $g_1,\dots, g_s\geq 2$. 

In terms of finiteness properties, the first non-trivial class of subgroups of a direct product of $r$ surface groups is given by the ones which are of type $\mathcal{F}_{r-1}$, but not $\mathcal{F}_r$. The examples constructed by Dimca, Papadima and Suciu \cite{DimPapSuc-09-II} show the existence of K\"ahler groups of this type for every $r\geq 3$. They are obtained as fundamental groups of complex hypersurfaces in direct products of Riemann surfaces. Their construction was subsequently generalized by Biswas, Mj and Pancholi \cite{BisMjPan-14} and by the author \cite{Llo-16-II}. All known examples of this type can be obtained from the following result:

\begin{theorem}[\cite{Llo-16-II}]
\label{thmLlo16II}
 Let $r\geq 3$, let $E$ be an elliptic curve and let $f_i : S_{g_i} \rightarrow E$ be branched covers, for $1\leq i \leq r$. Define the map $f:= \sum_{i=1}^r: S_{g_1}\times \dots \times S_{g_r}\rightarrow E$ using the additive structure in $E$. Assume that the induced map $f_{\ast}$ on fundamental groups is surjective and let $H$ be the smooth generic fibre of $f$. Then $f$ induces a short exact sequence
 \[
  1 \rightarrow \pi_1 (H) \rightarrow \G_{g_1}\times \dots \times \G_{g_r} \rightarrow \pi_1 (E) \rightarrow 1.
 \]
 In particular, the group $\pi_1 (H)$ is K\"ahler of type $\mathcal{F}_{r-1}$, but not of type $\mathcal{F}_r$. Moreover, $\pi_1 (H)\leq \G_{g_1}\times \dots \times \G_{g_r}$ is an irreducible full subdirect product.
\end{theorem}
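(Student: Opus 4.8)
The plan is to establish the short exact sequence first and to read off the remaining assertions from it. Write $M=S_{g_1}\times\dots\times S_{g_r}$, $G=\pi_1 M=\G_{g_1}\times\dots\times\G_{g_r}$ and $N=\ker(f_\ast)$. Since $H$ lies in a level set of $f$, the composite $\pi_1 H\to G\xrightarrow{f_\ast}\pi_1 E$ is trivial, so the inclusion induces $\iota_\ast\colon\pi_1 H\to N$, and the whole theorem amounts to showing that $\iota_\ast$ is an isomorphism and that the quotient is $\pi_1 E\cong\ZZ^2$. I would first record that $H$ is connected: the critical points of $f=\sum_i f_i$ are exactly the tuples $(x_1,\dots,x_r)$ in which every $x_i$ is a critical point of the branched cover $f_i$, hence form a finite set, so $f$ is a submersion over a cofinite subset $U\subset E$ and its generic fibre is smooth; connectedness then follows from the surjectivity of $f_\ast$ via Stein factorization, the point being that surjectivity forces the finite part of the Stein factorization to be trivial. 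Granting this, Ehresmann's theorem makes $f\colon f^{-1}(U)\to U$ a smooth fibre bundle with fibre $H$, and van Kampen together with the analysis of the finitely many singular fibres yields right-exactness of $\pi_1 H\to G\to\pi_1 E\to 1$; since $H$ is a smooth, connected, projective variety, $\pi_1 H$ is automatically K\"ahler.

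The crux is the injectivity of $\iota_\ast$, and this is where the hypothesis $r\geq 3$ enters decisively. Over the regular locus, the homotopy exact sequence of the bundle $f^{-1}(U)\to U$ reads $\pi_2(U)\to\pi_1 H\to\pi_1(f^{-1}(U))\to\pi_1(U)\to 1$; as $U$ is an open Riemann surface it is aspherical, so $\pi_2(U)=0$ and $\pi_1 H\hookrightarrow\pi_1(f^{-1}(U))$ is injective. It remains to control the kernel introduced by filling in the singular fibres. In suitable local holomorphic coordinates each critical point of $f$ has the form $f=\mathrm{const}+\sum_i z_i^{k_i}$, so the singular fibres carry only isolated hypersurface singularities, whose local Milnor fibres are $(r-2)$-connected by Milnor's theorem. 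For $r\geq 3$ the Milnor fibres are therefore simply connected, so passing from the smooth fibre $H$ to a singular fibre, and gluing the corresponding pieces into $M$, only imposes relations in degrees $\geq 2$: the vanishing cycles live in $H_{r-1}$ and contribute nothing to $\pi_1$. Hence $\pi_1 H\to\pi_1 M$ stays injective, and combined with right-exactness we obtain the short exact sequence $1\to\pi_1 H\to G\to\pi_1 E\to 1$. I expect this Milnor-fibre step to be the main obstacle: it is exactly what fails when $r=2$, where $H$ is a curve, the Milnor fibres are wedges of circles, and the vanishing cycles do die in $\pi_1 M$.

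For the finiteness properties I would argue through the BNSR-invariants of $G$ and the Bieri--Renz criterion. Since $G/N\cong\ZZ^2$, the characters of $G$ vanishing on $N$ form a circle $S(G,N)\cong S^1$, consisting of the $\lambda\circ f_\ast$ for $\lambda\in\mathrm{Hom}(\ZZ^2,\RR)$. Each $f_i$ is a surjective holomorphic map, so $f_i^\ast$ is injective on $(1,0)$-forms and $L_i:=\im\big((f_i)_\ast\colon H_1(S_{g_i})\to H_1(E)\big)$ has finite index in $\ZZ^2$; consequently $\lambda\circ f_\ast$ restricts to a non-zero character on every factor $\G_{g_i}$ whenever $\lambda\neq 0$. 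Because each surface group satisfies $\Sigma^{\geq 1}(\G_{g_i})=\emptyset$, the product formula for $\Sigma$-invariants places every such character in $\Sigma^{r-1}(G)\setminus\Sigma^r(G)$; by Bieri--Renz, $S(G,N)\subset\Sigma^{r-1}(G)$ shows that $N$ is of type $\mathcal F_{r-1}$, while $S(G,N)\cap\Sigma^r(G)=\emptyset$ shows it is not of type $\mathcal F_r$.

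Finally, the subdirect structure is read off from $f_\ast=\sum_i(f_i)_\ast$. Fullness, $p_i(N)=\G_{g_i}$, follows by using the surjectivity of $f_\ast$ to cancel $(f_i)_\ast(\gamma)$ by suitable elements of the remaining factors. For irreducibility, fix any partition $\{1,\dots,r\}=I\sqcup J$ into non-empty parts; the images $\sum_{i\in I}L_i$ and $\sum_{j\in J}L_j$ are finite-index, hence infinite, subgroups of $\ZZ^2$, so they meet in an infinite subgroup. Any element of $G$ realising a common non-zero value on the two blocks lies in $N$ but not in $(N\cap\prod_{i\in I}\G_{g_i})\times(N\cap\prod_{j\in J}\G_{g_j})$, and these represent infinitely many cosets; thus $N$ is not commensurable with the product along any partition, i.e. it is an irreducible full subdirect product.
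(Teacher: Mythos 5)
First, a remark on the comparison itself: the paper you are being checked against does not prove Theorem \ref{thmLlo16II} at all; it imports it from \cite{Llo-16-II}, where the proof (following Dimca--Papadima--Suciu \cite{DimPapSuc-09-II}) runs the Milnor-fibre argument not on $M=S_{g_1}\times\dots\times S_{g_r}$ itself but on the pullback $\widehat{M}=M\times_E\CC$ of $f$ along the universal cover $\CC\to E$. This is exactly where your argument has its main gap. Your van Kampen step claims that gluing the singular-fibre neighbourhoods back into $f^{-1}(U)$ ``only imposes relations in degrees $\geq 2$''. On $M$ this is false: writing $D_i$ for a small disc around a critical value, van Kampen gives $\pi_1 M=\pi_1(f^{-1}(U))/\langle\langle t_i\tau_i^{-1}\rangle\rangle$, where $t_i$ is the boundary loop of $D_i$ and $\tau_i\in\pi_1 H$; these are honest $\pi_1$-relations (they kill, among other things, all loops around the singular fibres). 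What injectivity requires is that the normal closure of these relations meets $\pi_1 H$ trivially, and simple connectivity of the local Milnor fibres, invoked fibre by fibre, does not give that -- it only gives $\pi_1(F_{\mathrm{sing}})\cong\pi_1 H$. The standard repair is the published one: $\widehat{M}$ is the covering of $M$ with $\pi_1\widehat{M}=\ker f_{\ast}$ (connected because $f_{\ast}$ is onto, aspherical because $M$ is); it is exhausted by preimages of discs, each of which is homotopy equivalent to $H$ with cones over local Milnor fibres attached, i.e.\ (for $r\geq 3$) with cells of dimension $r$ attached. Hence $\pi_1 H\to\pi_1\widehat{M}=\ker f_{\ast}$ is an isomorphism, which gives injectivity and exactness in one stroke -- your ``degree $\geq 2$'' phrase becomes literally true on $\widehat{M}$ -- and moreover gives type $\mathcal{F}_{r-1}$ for free, since $\widehat{M}$ is then a $K(\pi_1 H,1)$ with finite $(r-1)$-skeleton.

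Second, your route to ``not of type $\mathcal{F}_r$'' cites a non-theorem. You need: every character nonzero on all $r$ factors lies \emph{outside} $\Sigma^r(G)$. That is the hard inclusion of the Direct Product Conjecture for BNSR invariants, which Sch\"utz showed fails in general for the homotopical invariants in higher dimensions; only the reverse inclusion (Meinert's inequality, which is what your $\mathcal{F}_{r-1}$ half uses, correctly) and the homological formula over a field (Bieri--Geoghegan) are theorems. Either rerun your argument with $\Sigma^{\ast}(-;\QQ)$ to contradict $FP_r(\QQ)$, or -- cleaner and standard here -- quote \cite{BriHowMilSho-13}: a full subdirect product of $r$ surface groups of type $FP_r$ has finite index, whereas $\ker f_{\ast}$ has infinite index (quotient $\ZZ^2$). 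Two smaller points. In the Stein factorization step, surjectivity of $f_{\ast}$ alone does not force the finite part $g\colon C'\to E$ to have degree one, since branched covers of $E$ can perfectly well be $\pi_1$-surjective; you must first observe that $g$ is unramified (a ramification point of $g$ would force the entire positive-dimensional fibre of $M\to C'$ over it to consist of critical points of $f$, contradicting isolated singularities), so $C'$ is an elliptic curve, $g$ is a covering, and $\pi_1$-surjectivity then gives degree one. Finally, what you prove under the name ``fullness'', namely $p_i(N)=\G_{g_i}$, is subdirectness; fullness means $N\cap\G_{g_i}\neq\{1\}$, which holds because $\ker(f_{i\ast})\leq\G_{g_i}$ is nontrivial (a surface group does not embed in $\ZZ^2$), but it is a different statement and needs to be said.
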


When passing to subgroups with more general finiteness properties the situation turns out to be more subtle. Indeed, the class of K\"ahler subgroups of direct products of surface groups that one can then obtain is much larger: they can attain any possible finiteness properties and can arise from subvarieties of all codimensions \cite{Llo-17}. Moreover, there is no apparent correlation between the codimension of a smooth subvariety realising a subgroup and its finiteness properties (see Theorems 1.2 and 4.1 in \cite{Llo-17} for precise statements of these results). 

On the other hand it is not hard to see that K\"ahler subgroups of a direct product of surface groups have to satisfy many restrictions. It is well-known that a K\"ahler subgroup of a direct product of surface groups must be isomorphic to a subdirect product of a free abelian group of even rank and finitely many surface groups. Even among subgroups of this form strong constraints hold \cite[Sections 6--9]{Llo-17}. For instance, every K\"ahler full subdirect product of $r$ surface groups which is of type $\mathcal{F}_k$ with $k> \frac{r}{2}$ must virtually be isomorphic to the kernel of an epimorphism $\G_{g_1}\times \dots \times \G_{g_r}\to \ZZ^{2m}$ for some $m\geq 0$ and $g_1,\dots,g_r\geq 2$; a similar result holds for finitely presented images of homomorphisms from K\"ahler groups to direct products of surface groups which are induced by holomorphic maps. 

Given the explicit nature of Theorem \ref{thmLlo16II}, one may now wonder if these constraints can be strengthened to show that all K\"ahler subgroups of direct products of $r$ surface groups are of the form of this theorem if they are of type $\mathcal{F}_{r-1}$, but not $\mathcal{F}_r$. Theorem \ref{thm3FactorHomVer}, Corollary \ref{thm3Factor}, and Theorem \ref{corCoabRk2} show that this is indeed the case after imposing additional assumptions and that the same remains true even when we consider images of homomorphisms to direct products of surface groups. The common key to these results is that our assumptions will allow us to reduce to situations in which all interesting K\"ahler subgroups are fundamental groups of smooth complex hypersurfaces.

We now turn to explaining in more detail why the condition that $\ker(p_i\circ \phi)$ is finitely generated in Theorem \ref{thm3FactorHomVer} arises naturally. For this recall the following classical result about K\"ahler groups:
\begin{theorem}
\label{thmSiuBeauville+eps}
   Let $G=\pi_1(X)$, for $X$ a compact K\"ahler manifold. The following properties are equivalent:
  \begin{enumerate}
   \item there is a surjective homomorphism $\phi: G\twoheadrightarrow \G_{h\geq 2}$;
   \item there is a holomorphic map $f: X\to S_{g\geq h\geq 2}$ with connected fibres, such that $\phi$ factors through $f_{\ast}:\pi_1(X)\to \G_g$;   
   \item there is a holomorphic map $\widehat{f}: X\to S_{g,\underline{n}}$   with connected and non-multiple fibres onto a closed hyperbolic Riemann orbisurface $S_{g,\underline{n}}$, with cone points of orders $\underline{n}=\left(n_1,\dots,n_k\right)$, such that  $\widehat{f}$ factors through $f$ and the kernel of the induced homomorphism $\widehat{f}_{\ast}: G\to \pi_1^{orb}(S_{g,\underline{n}})$ is finitely generated. 
  \end{enumerate}
\end{theorem}
The equivalence of (1) and (2) is due to Siu \cite{Siu-87} and Beauville \cite{Bea-91}, while the orbifold version was proved by Catanese \cite{Cat-03} (although it seems to already have been known before; see \cite{Kot-12} for further details). Conversely, every homomorphism from a K\"ahler group onto a closed hyperbolic orbisurface group with finitely generated kernel is induced by a holomorphic map (see \cite{Cat-08} and also \cite[Theorem 2]{Del-16}). For further background and definitions on maps from compact K\"ahler manifolds to hyperbolic orbisurfaces we refer the reader to \cite[Section 2]{Del-16}.

Note that every hyperbolic orbisurface group has a finite index subgroup which is a surface group. Considering that all of the main results in this paper require us to pass to finite index subgroups, we will thus restrict ourselves to considering surface groups for the remainder of this work.

We conclude this section by fixing some notations and definitions which we will require later. For a direct product $G_1\times \dots \times G_r$ of groups and $1\leq i_1 < \dots < i_k\leq r$, we denote by $p_{i_1,\dots,i_k}: G_1\times \dots \times G_r \to G_{i_1}\times \dots \times G_{i_k}$ the projection homomorphism. We say that a subgroup $K\leq G_1\times \dots \times G_r$ \textit{surjects onto $k$-tuples} (resp. \textit{virtually surjects onto $k$-tuples}, resp. \textit{virtually surjects onto pairs} (short: VSP)) if $p_{i_1,\dots,i_k}(K)= G_{i_1}\times \dots \times G_{i_k}$ (resp. $p_{i_1,\dots,i_k}(K)\leq G_{i_1}\times \dots \times G_{i_k}$ is a finite index subgroup, resp. $K$ virtually surjects onto $2$-tuples) for all $1\leq i_1< \dots < i_k\leq r$.

We call a subgroup $K\leq G_1\times \dots \times G_r$ \textit{coabelian} if it is the kernel of an epimorphism $\psi: G_1 \times \dots \times G_r \rightarrow \ZZ^k$ for some $k\geq 0$, and \textit{coabelian of even rank} if $k$ is even.

Moreover, for a product of surfaces $S_{g_1}\times \dots \times S_{g_r}$ and $1\leq i_1 < \dots < i_k$, we will denote by $q_{i_1,\dots,i_k}: S_{g_1}\times \dots \times S_{g_r}\rightarrow S_{g_{i_1}}\times \dots \times S_{g_{i_k}}$ the projection. We say that a subset $X\subset S_{g_1}\times \dots \times S_{g_r}$ \textit{geometrically surjects onto $k$-tuples} if $q_{i_1,\dots, i_k}(X) = S_{g_{i_1}}\times \dots \times S_{g_{i_k}}$ for all $1\leq i_1 < \dots < i_k \leq r$. We say that $X$ is \textit{geometrically subdirect} if it geometrically surjects onto $1$-tuples.

\section{From homomorphisms to complex hypersurfaces}
\label{secMainRes}
In this section we will prove the main result of this work. The results described in the introduction will be consequences of this result and the techniques developed in its proof.

\begin{theorem}
\label{thmMainTheorem}
 Let $r\geq 3$, let $X$ be a compact K\"ahler manifold and let $G=\pi_1 (X)$. Let $\phi: G \rightarrow \G_{g_1}\times \dots \times \G_{g_r}$ be a homomorphism with full subdirect image which can be realised by a holomorphic map $f: X \rightarrow S_{g_1}\times \dots \times S_{g_r}$. Assume that
 \begin{itemize}
  \item $\phi(G)$ is coabelian and a proper subgroup of $\G_{g_1}\times \dots \times \G_{g_r}$; and
  \item for $1\leq i_1 < \dots < i_{r-1}\leq r$ the composition $q_{i_1,\dots,i_{r-1}}\circ f: X \rightarrow S_{g_{i_1}}\times \dots \times S_{g_{i_{r-1}}}$ is surjective.
 \end{itemize}
Then there is an elliptic curve $B$ and branched covers $h_i: S_{g_i}\rightarrow B$ such that $\phi(G)=\pi_1 (H)$, where $H$ is the connected smooth generic fibre of the holomorphic map $h=\sum_{i=1}^r h_i: S_{g_1}\times \dots \times S_{g_r}\rightarrow B$.

Moreover, $f(X)$ is a (possibly singular) fibre of $h$.
\end{theorem}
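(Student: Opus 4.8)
The plan is to exploit the coabelian hypothesis together with the Albanese map to produce the elliptic curve $B$ and the fibration structure. Since $\phi(G)\leq \G_{g_1}\times\dots\times\G_{g_r}$ is coabelian, it is by definition the kernel of an epimorphism $\psi:\G_{g_1}\times\dots\times\G_{g_r}\to\ZZ^k$ for some $k\geq 1$ (properness forces $k\geq 1$). Restricting $\psi$ to each factor $\G_{g_i}$ gives a homomorphism $\G_{g_i}\to\ZZ^k$, which is realised geometrically by a holomorphic map from $S_{g_i}$ to a complex torus via the Albanese/Jacobian of $S_{g_i}$. The first step is to understand the rank $k$: I would use the fullness of $\phi(G)$ (each $\phi(G)\cap\G_{g_i}$ is nontrivial) and the VSP-type surjectivity onto $(r-1)$-tuples to constrain $k$. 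The key point is that a coabelian Kähler subgroup must be the kernel of a map to an \emph{even}-dimensional abelian group (the image of a Kähler group in its abelianisation that factors through a fibration is built from $1$-forms), and combined with the surjectivity onto $(r-1)$-tuples this should pin down $k=2$, i.e.\ the torus $B$ is one-dimensional, an elliptic curve.

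The second step is to realise $\psi$ geometrically. The holomorphic map $f:X\to S_{g_1}\times\dots\times S_{g_r}$ composed with the sum of the Albanese coordinates should give a holomorphic map $X\to B$ for $B$ a quotient of $\CC^k/\text{(lattice)}$; here I would invoke the structure theory of the Albanese torus of $X$ and the fact that $\psi$ factors through $H_1(X;\ZZ)\to\ZZ^k$. Writing $h_i:S_{g_i}\to B$ for the maps induced on the $i$-th factor by the components of $\psi$, the composition $h=\sum_{i=1}^r h_i$ is a holomorphic map to $B$, and by construction $\psi=h_\ast$ on fundamental groups. The short exact sequence $1\to\pi_1 H\to\G_{g_1}\times\dots\times\G_{g_r}\xrightarrow{h_\ast}\pi_1 B\to 1$ would then follow from Theorem~\ref{thmLlo16II} once I verify $h_\ast$ is surjective with kernel $\pi_1 H$; surjectivity of $h_\ast$ is exactly surjectivity of $\psi$, and the smooth generic fibre $H$ being connected uses the surjectivity of $f$ onto $(r-1)$-tuples to rule out disconnectedness.

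The third step identifies $\phi(G)$ with $\pi_1 H$. Since $\phi(G)=\ker\psi=\ker h_\ast$, and Theorem~\ref{thmLlo16II} identifies $\ker h_\ast$ with $\pi_1 H$ for the smooth generic fibre $H$ of $h$, the desired equality $\phi(G)=\pi_1 H$ is immediate \emph{provided} the $h_i$ produced are genuinely branched covers (each nonconstant) and that $h_\ast$ is surjective onto $\pi_1 B$. The nonconstancy of each $h_i$ is where fullness of $\phi(G)$ enters: if some $h_i$ were constant then $\psi|_{\G_{g_i}}$ would be trivial and the coabelian relation would degenerate, contradicting the interplay of fullness and properness. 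For the final assertion that $f(X)$ is a (possibly singular) fibre of $h$, I would observe that $\phi=f_\ast$ has image in $\ker\psi=\ker h_\ast$, so $h\circ f:X\to B$ induces the trivial map on $\pi_1$; since $X$ is compact Kähler and $h\circ f$ is holomorphic into a torus, a map inducing the trivial map on $\pi_1$ to an abelian variety must be constant, whence $f(X)$ lies in a single fibre $h^{-1}(b)$.

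The main obstacle I anticipate is the determination that $k=2$ and that $B$ is one-dimensional, together with the connectedness of the generic fibre $H$. The evenness of the rank of the abelianisation is a genuinely Kähler-geometric input (it comes from the complex structure on the space of holomorphic $1$-forms), and showing that the surjectivity onto $(r-1)$-tuples forces the map to factor through a \emph{single} elliptic curve rather than a higher-dimensional torus is the crux: one must rule out the possibility that $\psi$ has rank $\geq 4$, which would correspond to the higher-dimensional-torus examples of \cite{Llo-17} that are \emph{not} of type $\mathcal F_{r-1}$. The VSP hypothesis on $(r-1)$-tuples is precisely the geometric condition that should exclude these, and making this exclusion rigorous — presumably via a careful analysis of the ranks of the restrictions $\psi|_{\G_{g_i}}$ and how they assemble — will be the technical heart of the argument.
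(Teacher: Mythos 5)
Your overall skeleton (Albanese maps, a sum of branched covers onto an elliptic curve, Theorem~\ref{thmLlo16II}, and the observation that $h\circ f$ kills $\pi_1$) matches the paper's strategy, but the proposal leaves the actual crux unproven, and you say so yourself: the step ``pin down $k=2$'' is exactly the technical heart, and your proposed mechanism for it does not work. You want to argue that $k$ is even because ``a coabelian K\"ahler subgroup must be the kernel of a map to an even-dimensional abelian group,'' but $\phi(G)$ is \emph{not} assumed K\"ahler here --- only $G=\pi_1X$ is, and $\phi$ need not be injective --- so there is no evenness statement to invoke; and even granting evenness, you concede you cannot exclude $k\geq 4$. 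The paper's mechanism is different and entirely geometric: it defines $B:=(A_1\times\dots\times A_r)/\overline{f}(A(X))$, the quotient of the product of Jacobians by the image of the Albanese torus of $X$, and uses the surjectivity of $q_{i_1,\dots,i_{r-1}}\circ f$ to show that for every tuple $(t_1,\dots,t_{r-1})$ in the product of the curves $a_i(S_{g_i})$ one has $\sum_{i=1}^{r-1}b_i(t_i)\in-\S_r$, where $\S_i$ is the image of $S_{g_i}$ in $B$. Since each $\S_i$ is irreducible of dimension at most one, this forces either all $\S_i$ to be points (which, via the concluding rank argument, makes $\phi(G)$ the whole product, contradicting properness) or all $\S_i$ to be translates of one irreducible curve; a local biholomorphism argument shows this curve is smooth, and since at least two surface factors admit branched covers onto it with commuting finite-index images in its fundamental group, that fundamental group contains $\ZZ^2$, so the curve is an elliptic curve. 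None of this appears in your proposal.

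There are two further gaps. First, your order of construction is backwards in a way that matters: you propose to ``realise $\psi$ geometrically,'' but an abstract epimorphism $\psi:\G_{g_1}\times\dots\times\G_{g_r}\to\ZZ^k$ has no reason to be induced by holomorphic maps $S_{g_i}\to B$ for the given complex structures. The paper never realises $\psi$; it constructs $h$ from the Albanese diagram of $f$ (independently of $\psi$) and only at the very end identifies $\ker h_{\ast}=\ker\psi=\phi(G)$, by showing that $\pi_1B$ is a quotient of $\ZZ^l$ (because $(\pi_1X)_{ab}\to(\G_{g_1}\times\dots\times\G_{g_r})_{ab}$ factors through $(\phi(G))_{ab}$) and that $\mathrm{rk}_{\ZZ}\pi_1B=l$ exactly (using Lemma~\ref{lemTor}), so that $\ZZ^l\to\pi_1B$ is an isomorphism and the two kernels coincide. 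Second, for the ``moreover'' clause your lifting argument correctly shows $f(X)$ is \emph{contained} in a fibre of $h$, but the statement asserts $f(X)$ \emph{is} a fibre; this requires knowing that $\dim f(X)=r-1$ (from the $(r-1)$-tuple surjectivity) and that every fibre of $h$, including the singular ones, is irreducible, which the paper proves via Milnor's theory of isolated hypersurface singularities (connectedness of the link in complex dimension $\geq 2$). Without that irreducibility input, the final assertion is not established.
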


The proof of Theorem \ref{thmMainTheorem} uses the following simple and well-known result.
\begin{lemma}
 \label{lemTor}
 Let $X$ and $Y$ be complex tori and let $f: X \rightarrow Y$ be a surjective holomorphic homomorphism. Then $f_{\ast}(\pi_1 (X))\leq \pi_1 (Y)$ is a finite index subgroup.
\end{lemma}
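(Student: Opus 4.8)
The plan is to reduce everything to elementary linear algebra of lattices by passing to the universal covers. Write $X=\CC^n/\L_X$ and $Y=\CC^m/\L_Y$ for lattices $\L_X\subset\CC^n$, $\L_Y\subset\CC^m$, and identify $\pi_1 X\cong\L_X$ and $\pi_1 Y\cong\L_Y$ with the deck groups of the universal coverings $\CC^n\to X$, $\CC^m\to Y$. Since $f$ is a holomorphic group homomorphism we have $f(0)=0$, so it lifts to a holomorphic map $\tilde f:\CC^n\to\CC^m$ with $\tilde f(0)=0$ which is again additive; a holomorphic additive map of complex vector spaces is $\CC$-linear, so $\tilde f$ is $\CC$-linear. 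By construction $\tilde f(\L_X)\subseteq\L_Y$, and under the identifications above the induced map $f_\ast:\pi_1 X\to\pi_1 Y$ is precisely the restriction $\tilde f|_{\L_X}:\L_X\to\L_Y$. Thus the statement to prove becomes: $\tilde f(\L_X)$ has finite index in $\L_Y$.

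The main step, and the only place where surjectivity of $f$ enters, is to upgrade it to surjectivity of the linear lift $\tilde f$. Let $W=\tilde f(\CC^n)\subseteq\CC^m$, a $\CC$-linear (in particular real) subspace. Since $X$ is compact, $f(X)$ is compact, hence closed, and surjectivity of $f$ gives $f(X)=Y$, which translates into $W+\L_Y=\CC^m$. Passing to the real quotient $V=\RR^{2m}/W$, this says that the image of $\L_Y$ generates the real vector space $V$ as an abelian group. But $\L_Y$ is finitely generated, so its image is a finitely generated subgroup of $V\cong\RR^{\dim_\RR V}$, and such a subgroup can equal all of $V$ only when $V=0$. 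Hence $W=\CC^m$ and $\tilde f$ is surjective.

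Finally I would conclude with a rank count. As $\L_X$ is a lattice it spans $\CC^n=\RR^{2n}$ over $\RR$, and since $\tilde f$ is surjective its image $\tilde f(\L_X)$ spans $\tilde f(\RR^{2n})=\RR^{2m}$ over $\RR$. Thus $\tilde f(\L_X)$ is a subgroup of the free abelian group $\L_Y\cong\ZZ^{2m}$ of full real rank $2m$, and a full-rank subgroup of a finitely generated free abelian group has finite index. Therefore $f_\ast(\pi_1 X)=\tilde f(\L_X)$ has finite index in $\L_Y=\pi_1 Y$, as claimed. The one subtlety worth flagging is exactly the surjectivity of $\tilde f$: a priori one only knows that $f$ is onto, and it is the finite generation of $\L_Y$ (equivalently, compactness of $Y$) that promotes $W+\L_Y=\CC^m$ to $W=\CC^m$; the remaining steps are routine.
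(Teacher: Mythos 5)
Your proof is correct. Note that the paper itself gives no proof of Lemma \ref{lemTor} at all: it is stated as ``simple and well-known'' and used directly in the proof of Theorem \ref{thmMainTheorem}, so there is no argument to compare against, and what you have written is exactly the standard argument that the author is implicitly invoking. All three steps check out: the lift $\widetilde{f}\colon \CC^n\to\CC^m$ with $\widetilde{f}(0)=0$ is additive and holomorphic, hence $\CC$-linear, and restricts to $f_{\ast}$ on the lattices; surjectivity of $f$ gives $W+\Lambda_Y=\CC^m$ for $W=\widetilde{f}(\CC^n)$, so the real vector space $\CC^m/W$ is a quotient of the finitely generated group $\Lambda_Y$ and therefore zero (a nonzero real vector space is divisible and uncountable, so it cannot be finitely generated as an abelian group), whence $\widetilde{f}$ is surjective; and a subgroup of $\Lambda_Y\cong\ZZ^{2m}$ whose $\RR$-span is all of $\RR^{2m}$ has rank $2m$ and hence finite index. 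The only point you assert rather than prove is the additivity of the lift $\widetilde{f}$; this is standard, and if you want it self-contained, observe that $(z,w)\mapsto \widetilde{f}(z+w)-\widetilde{f}(z)-\widetilde{f}(w)$ is a continuous map on the connected space $\CC^n\times\CC^n$ taking values in the discrete lattice $\Lambda_Y$ and vanishing at the origin, hence vanishing identically.
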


\begin{proof}[Proof of Theorem \ref{thmMainTheorem}]
 Let $A(X)$ be the Albanese torus of $X$, let $A_i=A(S_{g_i})$ be the Albanese torus of $S_{g_i}$ for $1\leq i \leq r$, and denote by $a _X: X \rightarrow A(X)$ and $a_i: S_{g_i}\rightarrow A_i$ the respective Albanese maps. By the universal property of the Albanese map we obtain a commutative diagram
 \begin{equation}
 \label{eqnAlb}
  \xymatrix{	X \ar[r]^(.35){f} \ar[d]_{a _X} & S_{g_1}\times \dots \times S_{g_r} \ar[d]_{(a_1,\dots,a_r)} \ar[rd]^h & \\
  				A(X) \ar[r]^(.35){\overline{f}} & A_1 \times \dots \times A_r \ar[r] &B,}
 \end{equation}
 where $B$ is the complex torus $(A_1 \times \dots \times A_r)/\overline{f}(A(X))$ (this quotient is well-defined, since the induced map on complex tori is a holomorphic homomorphism with image a complex subtorus). Denote by $b: A_1 \times \dots \times A_r \rightarrow B$ the quotient map. It is the sum $b=\sum_{i=1}^r b_i$ of the restrictions $b_i : A_i \rightarrow B$.
 
Surjectivity of the map $q_{1,\dots,r-1}\circ f: X \rightarrow S_{g_1}\times \dots \times S_{g_{r-1}}$ implies, that for every $(s_1,\dots,s_{r-1})\in S_{g_1} \times \dots \times S_{g_{r-1}}$ there are $x\in X$ and $s_{x,r} \in S_{g_r}$ with $f(x) = \left(s_1,\dots,s_{r-1},s_{x,r}\right)$. By commutativity of \eqref{eqnAlb} we obtain that 
\[
 \left(t_1,\dots, t_{r-1},t_{x,r}\right):= \left(a_1(s_1),\dots, a_{r-1}(s_{r-1}), a_r(s_{x,r})\right) = \overline{f}(a_X(x)).
 \]

Denote by $\S_i:= b_i(a_i(S_{g_i}))$ the image of $S_{g_i}$ in $B$. Since $\overline{f}(A(X))=\ker (b)$, we obtain that $b(t_1,\dots,t_{r-1},t_{x,r})=0\in B$ and hence $\sum _{i=1}^{r-1} b_i(t_i)=-b_r(t_{x,r})\in -\S_r$. Irreducibility of $S_{g_i}$ implies that $\S_i$ is an irreducible subvariety of dimension at most one in $B$. Thus, the holomorphic map
\[
 \begin{array}{rcl}
 \sum _{i=1}^{r-1} b_i : a_1(S_{g_1})\times \dots \times a_{r-1}(S_{g_{r-1}}) & \rightarrow &-\S_r\\
 (t_1,\dots,t_{r-1})\mapsto \sum_{i=1}^{r-1} b_i(t_i)
 \end{array}
\]
is either trivial or surjective. It follows that the image $b_i(a_i(S_{g_i}))$ is either a point or a translate of $-\S_r$ for $1\leq i \leq r-1$. If, moreover, at least one of the images $b_i(a_i(S_{g_i}))$ is non-trivial then $-\S_r\subset B$ is non-trivial and therefore an irreducible subvariety of dimension one. A repeated application of the same argument to all $j\in \left\{1,\dots,r\right\}$ shows that if at least one of the images $\S_i$ of $S_{g_i}$ in $B$ is one-dimensional then all of the $\S_i$ are one-dimensional and translates of each other.

It follows that either
\begin{enumerate}
 \item $\S_i$ is a point for all $i \in \left\{1,\dots,r\right\}$; or
 \item $\S_i$ is a one-dimensional irreducible projective variety and $\S_i$ is a translate of $\S_j$ for all $i,j\in \left\{1,\dots,r\right\}$.
\end{enumerate}

Consider the case when the image of all of the $S_{g_i}$ is one-dimensional in $B$. Then the restriction of the holomorphic map
\[
 \sum _{i=1}^{r-1} b_i\circ a_i : S_{g_1}\times \dots \times S_{g_{r-1}} \rightarrow -\S_r
\]
to $\left\{(s_1,\dots,s_{j-1})\right\} \times S_{g_j} \times \left\{(s_{j+1},\dots,s_{r-1})\right\}$ is a surjective holomorphic map for every $j\in \left\{1,\dots,r\right\}$, $(s_1,\dots,s_{j-1})\in S_{g_1}\times \dots \times S_{g_{j-1}}$, and $(s_{j+1},\dots,s_r)\in S_{g_{j+1}}\times \dots \times S_{g_{r-1}}$. By symmetries the same holds for $\sum_{i=1,i\neq j}^r b_i\circ a_i$, $1\leq j \leq r$.

By assumption $r\geq 3$. It follows that for any choice of points $s_{1,0}\in S_{g_1}$ and $s_{r,0}\in S_{g_r}$ we have
\begin{align*}
-\S_r + a_r(b_r(s_{r,0})) & = h\left(S_{g_1}\times \dots \times S_{g_{r-1}}\times \left\{s_{r,0}\right\}\right)\\
 & = h\left(\left\{s_{1,0}\right\}\times S_{g_2}\times \dots \times S_{g_{r-1}}\times \left\{s_{r,0}\right\}\right)\\ 
 & = h\left(\left\{s_{1,0}\right\} \times S_{g_2} \times \dots \times S_{g_r}\right)= a_1(b_1(s_{1,0}))-\S_1.
\end{align*}
Hence, $-\S_r + a_r(b_r(s_{r,0})) = a_1(b_1(s_{1,0}))-\S_1$ is independent of $s_{1,0}$ and $s_{r,0}$ and therefore the image $h(S_{g_1}\times \dots \times S_{g_r})=a_r(b_r(s_{r,0}))-\S_r$ is one-dimensional and a translate of $-\S_r$. Furthermore, the restriction $h|_{\left\{(s_1,\dots,s_{j-1})\right\} \times S_{g_j} \times \left\{(s_{j+1},\dots,s_r)\right\}}$ maps onto $a_r(b_r(s_{r,0}))-\S_r$ for every $j\in \left\{1,\dots,r\right\}$, $(s_1,\dots,s_{j-1})\in S_{g_1}\times \dots \times S_{g_{j-1}}$, and $(s_{j+1},\dots,s_r)\in S_{g_{j+1}}\times \dots \times S_{g_r}$

Choose $s_{1,0} \in S_{g_1}$ such that there is an open neighbourhood $U\subset S_{g_1}$ of $s_{1,0}$ in which the restriction $b_i\circ a_i : U\rightarrow b_i(a_i(U))\subset B$ is biholomorphic. In particular, $b_i(a_i(U))$ is a smooth one-dimensional complex manifold. 

Surjectivity of the restriction $\beta|_{\left\{(s_{1,0})\right\} \times S_{g_2}\times \left\{(s_3,\dots,s_{r})\right\}}$ for every $(s_{3},\dots,s_{r})\in S_{g_3}\times \dots \times S_{g_{r}}$ implies that for every $z\in a_r(b_r(s_{r,0}))-\S_r$ there is a point $s_{2,z}\in S_{g_2}$ such that $h(s_{1},s_{2,z},s_{3},\dots,s_{r,0}) = z$. Then the map 
\[
 \begin{array}{rcl}
 U &\rightarrow & a_r(b_r(s_{r,0})) -\S_r\\
 u &\mapsto & b_1(a_1(u)) + b_2(a_2(s_{2,z}))+ \sum_{i=3}^{r} b_i (a_i(s_{i}))
 \end{array}
\]
is a biholomorphic map from $U$ onto a neighbourhood of $z \in \S_r$. Hence, $z$ is a smooth point of $\S_r$ and it follows that $\S_r$ is a smooth connected projective variety of dimension one. 

The $S_{g_i}$ are finite-sheeted branched coverings of the closed Riemann surface $a_r(b_r(s_{r,0}))-\S_r$ and thus the image of $\pi_1 (S_{g_i})$ in $\pi_1 (a_r(b_r(s_{r,0}))-\S_r)$ is a finite index subgroup for $1\leq i \leq r$. Since $r\geq 2$, there is a $\ZZ^2$-subgroup in $\pi_1 (a_r(b_r(s_{r,0}))-\S_r)$ and the only closed Riemann surface with a $\ZZ^2$ subgroup in its fundamental group is an elliptic curve. Thus, $a_r(b_r(s_{r,0}))-\S_r$ is an elliptic curve. 

Surjectivity of the maps $a_{i\ast}: \pi_1 (S_{g_i})\rightarrow \pi_1 (A_i)$ on fundamental groups and the fact that the fibres of the quotient map $A_1\times \dots \times A_r\to B$ are connected imply that the map $h$ is surjective on fundamental groups. Hence, $a_r(b_r(s_{r,0}))-\S_r=B$, $h$ is surjective holomorphic, and the restrictions $h|_{S_{g_j}}$, $1\leq j \leq r$ are branched covers. Theorem \ref{thmLlo16II} implies that $h$ induces a short exact sequence
\[
1\rightarrow \pi_1 (H) \rightarrow \pi_1 (S_{g_1})\times \dots \times \pi_1 (S_{g_r})\stackrel{h_{\ast}}{\rightarrow} \pi_1 (B) = \ZZ^2 \to 1
\]
on fundamental groups, where $H$ is the connected smooth generic fibre of $h$.

Since $\phi(G) \leq \G_{g_1}\times \dots \times \G_{g_r}$ is coabelian, we obtain a commutative diagram
\begin{equation}
\label{eqn:diag2}
\xymatrix{ 	1\ar[r]&\ 	\phi(G)\ar[r]\ar[d] & \G_{g_1}\times \dots \times \G_{g_r}\ar[r]\ar[d] & \ZZ^l\ar[r]\ar[d]& 1\\
			&			(\phi(G))_{ab} \ar[r] & \left(\G_{g_1}\times \dots \times \G_{g_r}\right)_{ab} \ar[r] & \ZZ^l \ar[r]& 1},
\end{equation}
where the lower sequence is exact by right-exactness of abelianization. 

We now use the same line of argument as in the proof of \cite[Lemma 6.1]{Llo-17} to show that $l=\mathrm{rk}_{\ZZ}(\pi_1 (B))$. Since it is short we include it here for the readers convenience:

By definition of the Albanese map, the commutative diagram \eqref{eqnAlb} induces a commutative diagram
{\small
\begin{equation}
\label{eqn:diag3}
\xymatrix{ \pi_1 (X) \ar[r]^{f_{\ast}} \ar[d] & \G_{g_1}\times \dots \times \G_{g_r} \ar[d]\ar[rd]^{h_{\ast}} \ar[r]& \ZZ^l  \ar[r] & 1 \\
			\pi_1 (A(X)) = (\pi_1 (X))_{ab} \ar[r]^(.35){\overline{f}_{\ast}=f_{\ast,ab}} & \pi_1 (A_1) \times \dots \times \pi_1 (A_r) = \left(\G_{g_1}\times \dots \times \G_{g_r}\right)_{ab} \ar[r] & \pi_1 (B). }
\end{equation}}

The map $\phi: \pi_1 (X) \rightarrow \G_{g_1}\times \dots \times \G_{g_r}$ factors through $\phi(G)$; thus the map $(\pi_1 (X))_{ab} \rightarrow \left(\G_{g_1}\times \dots \times \G_{g_r}\right)_{ab}$ factors through $(\phi(G))_{ab}$. It follows that
\[
\mathrm{im}\left( (\pi_1 (X))_{ab} \rightarrow  \left(\G_{g_1}\times \dots \times \G_{g_r}\right)_{ab}\right) = \mathrm{im}\left( (\phi(G))_{ab} \rightarrow \left(\G_{g_1}\times \dots \times \G_{g_r}\right)_{ab}\right),
\] 
and exactness of the bottom horizontal sequence in \eqref{eqn:diag2} implies that
\[
\left(\G_{g_1}\times \dots \times \G_{g_r}\right)_{ab}/  \mathrm{im}\left( (\pi_1 (X))_{ab} \rightarrow  \left(\G_{g_1}\times \dots \times \G_{g_r}\right)_{ab}\right) \cong \ZZ^l.
\]
The commutative diagram \eqref{eqn:diag3} can be extended to a commutative diagram
{\small \[
\xymatrix{ \pi_1 (X) \ar[r]^{f_{\ast}} \ar[d] & \G_{g_1}\times \dots \times \G_{g_r} \ar[d]\ar[rd] \ar[r]& \ZZ^l  \ar[r]\ar@{->>}[d] & 1 \\
			\pi_1 (A(X)) = (\pi_1 (X))_{ab} \ar[r]^(.35){\overline{f}_{\ast}=f_{\ast,ab}} & \pi_1 (A_1) \times \dots \times \pi_1 (A_r) = \left(\G_{g_1}\times \dots \times \G_{g_r}\right)_{ab} \ar[r]\ar[ur] & \pi_1 (B) }.
\]}

Hence, the fundamental group $\pi_1 (B)$ is a quotient of $\ZZ^l$. By Lemma \ref{lemTor} we have $\mathrm{rk}_{\ZZ} \overline{f}_{\ast} (\pi_1 (A(X))) = \mathrm{rk}_{\ZZ} \pi_1 (\overline{f}(A(X)))$. Thus, we obtain
\begin{align*}
\mathrm{rk}_{\ZZ}(\pi_1 (B)) &=2\cdot \mathrm{dim}_{\CC} B = 2\cdot \mathrm{dim}_{\CC} (A_1\times \dots \times A_r)- 2\cdot \mathrm{dim}_{\CC} \overline{f}(A(X))\\ 
&= 2 \cdot \mathrm{rk}_{\ZZ}\left(\G_{g_1}\times \dots \times \G_{g_r}\right)_{ab} - 2\cdot \mathrm{rk}_{\ZZ} \overline{f}_{\ast} (\pi_1 (A(X))) = l.
\end{align*}

It follows that the epimorphism $\ZZ^l\rightarrow \pi_1 (B)$ is an isomorphism and therefore we obtain an isomorphism of short exact sequences
\[
 \xymatrix{ 	1 \ar[r] & \phi(G) \ar[r]\ar[d]^{\cong} & \G_{g_1}\times \dots \times \G_{g_r} \ar[r]\ar[d]^{\cong} & \ZZ^l \ar[r]\ar[d]^{\cong} & 1\\
 				1 \ar[r] &  \pi_1 (H) \ar[r] & \G_{g_1}\times \dots \times \G_{g_r} \ar[r]^{h_{\ast}} & \pi_1 (B) \ar[r] & 1.}
\]

If $\S_i$ is a point then the same argument shows that $B$ is a point and the isomorphism of short exact sequences implies that $\phi(G) \cong \pi_1 (H) \cong \G_{g_1}\times \dots \times \G_{g_r}$ is not a proper subgroup. 

Finally, observe that since $h\circ f : X \to B$ factors through the Albanese $A(X)$ of $X$, the image of $X$ in $B$ is trivial. Hence, $f(X)$ is contained in a fibre of $h$. Since $f(X)$ is the image of a smooth complex manifold under a proper holomorphic map, it is an irreducible subvariety of a fibre of $h$. The map $h$ has isolated singularities, since the restriction of $h$ to every surface factor is a branched covering of $B$, and its fibres (singular or non-singular) are connected. 

If $f(X)$ is contained in a smooth generic fibre of $h$ then it is equal to this fibre, since smooth projective varieties are irreducible. So assume that $f(X)$ is contained in one of the finitely many singular fibres $H_s$ of $h$ and let $z\in H_s$ be a singular point. By Milnor's theory of isolated hypersurface singularities \cite{Mil-68}, a neighbourhood of $z$ in $H_s$ is homeomorphic to a cone over a smooth manifold $K$ (called the link of the singularity). Furthermore, $K$ is $n-2$-connected for $n$ the complex dimension of $H_s$. In particular, $K$ is connected if $n\geq 2$. Since the complex dimension of $H_s$ is $r-1\geq 2$, it follows that $K$ is connected. Thus, the complement of the cone point in the cone over $K$ is connected. Connectedness of $H_s$ then implies that the complement of the finite set of singular values in $H_s$ is a connected smooth complex manifold. It follows that $H_s$ is an irreducible variety and thus $H_s=f(X)$. This completes the proof.
\end{proof}

\section{The three factor case}
\label{sec3Fact}

By combining Theorem \ref{thmMainTheorem} with the following results from \cite{Llo-17}, we can complete the classification of K\"ahler subgroups of direct products of three surface groups up to passing to finite index subgroups.

\begin{proposition}[{\cite[Proposition 9.5]{Llo-17}}]
\label{propSurjPairs}
Let $r\geq 2$, let $X$ be a compact K\"ahler manifold and let $G=\pi_1 (X)$. Let $\phi: G \rightarrow \G_{g_1}\times \dots \times \G_{g_r}$ be a homomorphism with finitely presented full subdirect image such that the projections $p_i \circ \phi : G \to \G_{g_i}$, $1\leq i \leq r$, have finitely generated kernel.

Then $\phi$ is induced by a holomorphic map $f: X \rightarrow S_{g_1}\times \dots\times S_{g_r}$ and the composition $q_{i,j}\circ f: X \rightarrow S_{g_i} \times S_{g_j}$ is surjective for $1\leq i< j \leq r$. 
\end{proposition}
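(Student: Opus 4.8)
The plan is to prove the statement in two stages. First I would realise $\phi$ by a holomorphic map by treating each coordinate separately, and then I would rule out the degenerate configurations in which a pair of coordinate projections of the image collapses to a curve. The two standing hypotheses split cleanly across these stages: subdirectness together with finite generation of the kernels drives the realisation, while fullness is exactly what excludes the degenerate pairs.

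For the realisation, fix $i$ and consider $\rho_i := p_i\circ\phi\colon G\to\G_{g_i}$. Since $\phi(G)$ is subdirect, $\rho_i$ is surjective, and by hypothesis its kernel is finitely generated. I would invoke the standard realisation theorem for homomorphisms from K\"ahler groups to surface groups (in the circle of ideas of Siu, Beauville and Catanese, via harmonic maps and the factorisation theory controlling orbifold fibrations): a surjection from a K\"ahler group onto $\G_{g_i}$ with finitely generated kernel is induced, for a suitable complex structure on $S_{g_i}$, by a holomorphic fibration $f_i\colon X\to S_{g_i}$ with connected fibres. The finite generation of $\ker\rho_i$ is precisely what forces the base $2$-orbifold to carry no cone points, hence to be the genuine genus-$g_i$ curve $S_{g_i}$ with connected fibres: the generic fibre group is always finitely generated, but in the presence of orbifold multiple fibres the kernel of the map to $\G_{g_i}$ would in addition contain the infinitely generated normal closure of the cone loops. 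As the target $S_{g_1}\times\dots\times S_{g_r}$ carries the product complex structure and the $f_i$ may be chosen independently, the map $f:=(f_1,\dots,f_r)$ is holomorphic; since a homomorphism into a direct product is the tuple of its coordinate homomorphisms and the target is aspherical, $f_{\ast}=\phi$.

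For surjectivity onto pairs, fix $i<j$ and set $V:=\overline{(f_i,f_j)(X)}\subseteq S_{g_i}\times S_{g_j}$, an irreducible projective subvariety. Each $f_i$ is surjective, being a nonconstant holomorphic map to a curve, so $V$ surjects onto both factors and $\dim V\geq 1$. If $\dim V=2$ then $V=S_{g_i}\times S_{g_j}$ and $q_{i,j}\circ f$ is surjective, as desired; I claim $\dim V=1$ is impossible. Suppose it held and take the Stein factorisation $X\xrightarrow{g} W\xrightarrow{\nu}V$ of $(f_i,f_j)$, so that $g$ has connected fibres and $W$ is a smooth projective curve mapping finitely via $\pi_i,\pi_j$ to $S_{g_i},S_{g_j}$. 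Then $f_i=\pi_i\circ g$, and a generic fibre of $f_i$ is the disjoint union of $\deg\pi_i$ connected fibres of $g$; connectedness of the fibres of $f_i$ therefore forces $\deg\pi_i=1$, i.e.\ $\pi_i\colon W\xrightarrow{\sim}S_{g_i}$, and likewise $\pi_j\colon W\xrightarrow{\sim}S_{g_j}$. Hence $V$ is the graph of an isomorphism $\alpha\colon S_{g_i}\xrightarrow{\sim}S_{g_j}$, so $p_{i,j}(\phi(G))$ lies in the graph subgroup $\{(\gamma,\alpha_{\ast}\gamma):\gamma\in\G_{g_i}\}$, which meets $\G_{g_i}\times 1$ trivially. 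Thus $\phi(G)\cap\G_{g_i}\subseteq p_{i,j}(\phi(G))\cap(\G_{g_i}\times 1)=1$, contradicting fullness. Therefore $\dim V=2$ for every pair.

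The main obstacle is the realisation theorem invoked in the first stage: passing from the purely group-theoretic surjection $\rho_i$ with finitely generated kernel to an honest holomorphic fibration over the correct curve with connected fibres. This is where the K\"ahler hypothesis is essential and where the analytic input is needed; in particular one must argue that finite generation of the kernel rules out the appearance of orbifold multiple fibres, so that the base is $S_{g_i}$ and not a hyperbolic $2$-orbifold with the same underlying curve (finite presentability of $\phi(G)$ together with the finitely generated kernels secures the hypotheses of this machinery). By contrast, the second stage is elementary complex geometry, and the role of the hypotheses there is transparent: subdirectness gives surjectivity of the $f_i$, while fullness is exactly the obstruction preventing a pair of projections from degenerating onto the graph of an isomorphism.
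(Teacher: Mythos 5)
Your proof is correct, but there is an important contextual point: this proposition is not proved anywhere in the paper you were given --- it is imported as a black box from \cite{Llo-17} (Proposition 9.3 and Remark 9.4 there), so there is no in-paper argument to compare against line by line. On its own terms, your two-stage argument works. Stage 1 is the expected route: subdirectness makes each $p_i\circ\phi$ a surjection onto $\G_{g_i}$, the hypothesis gives it finitely generated kernel, and the Beauville--Siu--Catanese realization theorem then yields a holomorphic fibration $f_i\colon X\to S_{g_i}$ with connected fibres inducing $p_i\circ\phi$ for a suitable complex structure on $S_{g_i}$ (exactly the dependence on the complex structure of $X$ that the paper itself flags after Corollary \ref{thm3Factor}); the tuple $f=(f_1,\dots,f_r)$ then induces $\phi$. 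Stage 2 is sound and is where your write-up adds genuine value: Stein factorisation of $(f_i,f_j)$ through a smooth curve $W$, the observation that connectedness of the fibres of $f_i$ and $f_j$ forces both finite maps $W\to S_{g_i}$, $W\to S_{g_j}$ to have degree one, hence a one-dimensional image would be the graph of a biholomorphism, whose graph subgroup meets $\G_{g_i}\times 1$ trivially --- contradicting fullness. (Two cosmetic points: by Remmert's proper mapping theorem the image is already closed, so no closure is needed; and the inclusion $\phi(G)\cap\G_{g_i}\subseteq p_{i,j}(\phi(G))\cap(\G_{g_i}\times 1)$ should strictly be phrased as $p_{i,j}$ being injective on the $i$-th factor, but the intent is unambiguous.)

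One substantive difference worth recording: your Stage 2 never uses finite presentability of $\phi(G)$. The natural alternative argument --- and, given the hypotheses carried by the cited statement, plausibly the one in \cite{Llo-17} --- is group-theoretic: finite presentability of the full subdirect image gives virtual surjection onto pairs via Theorem \ref{thmBHMS}, so $p_{i,j}(\phi(G))$ has finite index in $\G_{g_i}\times\G_{g_j}$, which is then shown to be incompatible with the pair-image being a curve. Your geometric route (connected fibres plus fullness) is more economical in its hypotheses and arguably more elementary; the price is that it leans entirely on the connectedness of fibres supplied by the realization theorem, which you correctly identify as the essential analytic input.
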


\begin{theorem}[{\cite[Theorem 6.13]{Llo-17}}]
Let $X$ be a compact K\"ahler manifold and let $G=\pi_1 (X)$. Let $\psi: G \to  \G_{g_1}\times \G_{g_2}\times \G_{g_3}$ be a homomorphism such that the projection $p_i \circ \psi$  has finitely generated kernel for $1\leq i \leq r$ and the image $\overline{G}:=\psi(G)$ is finitely presented. Then one of the following holds:
 \begin{enumerate}
  \item $\overline{G}=\pi_1 (R)$ for $R$ a closed Riemann surface of genus $\geq 0$;
  \item $\overline{G}=\ZZ^k$ for $k\in \left\{1,2,3\right\}$
  \item $\overline{G}$ is virtually a direct product $\ZZ^k\times \Gamma_{h_1}\times \Gamma_{h_2}$ for $h_1, h_2\geq 2$ and $k\in \left\{0,1\right\}$;
  \item $\overline{G}$ is virtually $\ZZ^k \times \Gamma_h$ for $h\geq 2$ and $k\in \left\{1,2\right\}$;
  \item $\overline{G}$ is virtually coabelian of even rank.
 \end{enumerate}
 \label{thm3FactorHom1}
\end{theorem}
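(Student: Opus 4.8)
The plan is to organise the argument according to how many coordinate projections of the image are ``large'', reducing the low-dimensional cases to the structure theory of subdirect products and the genuinely three-dimensional case to the Albanese argument already used in Theorem~\ref{thmMainTheorem}. First I would replace $G$ by the image $\overline{G}=\psi(G)\leq\G_{g_1}\times\G_{g_2}\times\G_{g_3}$ and observe that each projection $L_i:=p_i(\overline G)\leq\G_{g_i}$, being a subgroup of a surface group, is either free or of finite index, in which case it is itself a surface group $\G_{h_i}$. Using that a homomorphism from a K\"ahler group onto a hyperbolic surface group is induced by a holomorphic map onto a curve (Beauville--Catanese--Siu), together with the hypothesis that $\ker(p_i\circ\psi)$ is finitely generated, I would exclude the possibility that $L_i$ is free of rank $\geq 2$: a K\"ahler group admits no epimorphism onto a non-abelian free group with finitely generated kernel, since such a map would have to arise from a holomorphic fibration over a \emph{compact} curve and hence have surface-group, not free, image. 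Discarding any factor $i$ with $\overline G\cap L_i=1$ (along which $\overline G$ projects injectively, so that the number of factors only drops), I may thus assume, after passing to a finite-index subgroup, that each $L_i$ is trivial, infinite cyclic, or a surface group, and that $\overline G$ is a full subdirect product of the non-trivial $L_i$. Let $s$ be the number of indices for which $L_i$ is a surface group; the infinite-cyclic factors will be absorbed into the free-abelian part through the Albanese map.

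The cases $s\leq 2$ are comparatively routine. For $s=0$ the image is virtually abelian and, realising $\psi$ by a holomorphic map and combining the finite generation of the kernels with Delzant's description of the Bieri--Neumann--Strebel invariant of a K\"ahler group, $G$ is forced to be virtually free abelian or a curve group, giving (1)--(2). For $s=1$ a single surface factor survives; the corresponding fibration $X\to C$ together with the Albanese map exhibits $G$ as virtually $\ZZ^k\times\G_h$, which is (4). For $s=2$ the image is a finitely presented full subdirect product of two surface groups, and by the structure theory of Bridson--Howie--Miller--Short such a group has finite index in the product (the proper full subdirect alternatives are fibre products over an infinite quotient, which by Bieri--Stallings fail to be finitely presented); hence $\overline G$ is virtually $\G_{h_1}\times\G_{h_2}$ and $G$ is virtually $\ZZ^k\times\G_{h_1}\times\G_{h_2}$, which is (3).

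The main case is $s=3$, where $\overline G\leq\G_{h_1}\times\G_{h_2}\times\G_{h_3}$ is a finitely presented full subdirect product. By Proposition~\ref{propSurjPairs} the map $\psi$ is realised by a holomorphic map $f:X\to S_{h_1}\times S_{h_2}\times S_{h_3}$ whose composition with each two-factor projection is surjective, so that $\overline G$ virtually surjects onto pairs and onto $(r-1)$-tuples. The crux is to prove that such a $\overline G$ is \emph{coabelian}, i.e.\ contains the commutator subgroup of the ambient product. Granting this, Theorem~\ref{thmMainTheorem} applies directly: either $\overline G$ is all of $\G_{h_1}\times\G_{h_2}\times\G_{h_3}$, which is coabelian of rank $0$, or it is the fibre group of an elliptic pencil $h=\sum_i h_i$ and the rank identity $l=2\dim_{\CC}B$ from its proof forces the coabelian rank $l$ to be even. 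In either case $\overline G$ is virtually coabelian of even rank, which is (5). Finally I would transfer this structure back to $G$: the holomorphic realisation together with the finitely-generated-kernel hypotheses controls $\ker\psi$, so that $G$ is virtually isomorphic to $\overline G$, or else collapses through a fibration to a curve group $\pi_1 R$.

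I expect the decisive obstacle to be the coabelianity statement in the case $s=3$, namely that a finitely presented K\"ahler full subdirect product of three surface groups which virtually surjects onto pairs must contain the commutator subgroup of the product. This is exactly where the K\"ahler hypothesis is indispensable, since there do exist non-K\"ahler finitely presented full subdirect products of three surface groups that surject onto pairs and are not coabelian (Stallings--Bieri-type kernels of maps to $\ZZ$ with higher-rank image). The way I would establish it is once more through the Albanese diagram \eqref{eqnAlb}: the image of $(\pi_1 X)_{ab}$ in $\left(\G_{h_1}\times\G_{h_2}\times\G_{h_3}\right)_{ab}$, being the abelianised image of a holomorphic map, is a sub-Hodge-structure and hence the subgroup associated to a complex-subtorus quotient $\overline f(A(X))$; using surjectivity onto pairs one checks that $\overline G\cap\G_{h_i}$, which is normal in $\G_{h_i}$ by subdirectness, contains $[\G_{h_i},\G_{h_i}]$ for each $i$, so that $\overline G$ is the full preimage of its image in the abelianisation and thus coabelian of corank $l=2\dim_{\CC}B$, an even number.
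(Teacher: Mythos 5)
First, a caveat on the ground truth: this paper does not prove the statement at all --- it is imported verbatim from \cite[Theorem 6.13]{Llo-17} --- so there is no in-paper proof to compare against line by line. What the surrounding text does reveal is the machinery the author uses in exactly this situation (the proofs of Theorem \ref{thm3FactorHom2} and Theorem \ref{thmCxHyp}): two purely group-theoretic inputs, namely Theorem \ref{thmBHMS} (a finitely presented full subdirect product of surface groups virtually surjects onto pairs) and \cite[Corollary 3.6]{Kuc-14} (a full subdirect product which virtually surjects onto pairs --- for three factors this is the same as $(r-1)$-tuples --- is \emph{virtually} coabelian, i.e.\ after passing to finite index subgroups $\G_{\g_i}\le\G_{g_i}$ it contains a finite-index subgroup of the form $\ker\bigl(\G_{\g_1}\times\G_{\g_2}\times\G_{\g_3}\to\ZZ^l\bigr)$), with the K\"ahler hypothesis entering only afterwards, through the Albanese diagram, to force $l$ to be even. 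Your reduction steps (excluding non-abelian free projections via Delzant's BNS theorem, discarding factors meeting $\overline G$ trivially, the cases $s\le 2$) are in the right spirit, although the mixed cases with $\ZZ$-factors are only gestured at.

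The genuine gap is your ``crux'' in the case $s=3$: the claim that the hypotheses force $\overline G$ to be coabelian \emph{on the nose}, i.e.\ that $\overline G\cap\G_{g_i}\supseteq[\G_{g_i},\G_{g_i}]$ and that $\overline G$ is the full preimage of its image in the abelianisation. This is false. Take surjections $q_i:\G_{g_i}\to Q$ onto a non-abelian finite group $Q$ and let $\overline G=\{(x_1,x_2,x_3):q_1(x_1)=q_2(x_2)=q_3(x_3)\}$; equivalently, let $X$ be the finite cover of $S_{g_1}\times S_{g_2}\times S_{g_3}$ with $\pi_1X=\overline G$ and $\psi$ the inclusion. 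All hypotheses hold ($\overline G$ is full subdirect, of finite index, hence finitely presented, and $\ker(p_i\circ\psi)=\ker q_j\times\ker q_k$ is finitely generated), yet $\overline G\cap\G_{g_i}=\ker q_i$ does not contain $[\G_{g_i},\G_{g_i}]$ since $Q$ is non-abelian, $\overline G$ is not even normal in the ambient product, and $\overline G$ is not coabelian (a coabelian subgroup is either the whole product or of infinite index, whereas this one is proper of finite index). Consequently no Hodge-theoretic argument can establish your intermediate claim, and Theorem \ref{thmMainTheorem} cannot be applied to $\overline G$ directly, since its hypotheses require ``coabelian and proper''. The correct statement is \emph{virtual} coabelianity, and --- contrary to your remark that this is ``exactly where the K\"ahler hypothesis is indispensable'' --- it needs no K\"ahler input at all: it is Theorem \ref{thmBHMS} combined with \cite[Corollary 3.6]{Kuc-14}. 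Indeed $\ker(\G_{g_1}\times\G_{g_2}\times\G_{g_3}\to\ZZ)$, surjective on each factor, is finitely presented, full subdirect, surjects onto pairs and is coabelian, but is not K\"ahler (Theorem \ref{thmEpiZZ}); what the K\"ahler hypothesis buys is precisely the \emph{evenness} of the rank, obtained by pulling the finite-index coabelian subgroup back to a finite cover of $X$ and running the Albanese argument ($\ZZ^l\cong\pi_1B$, so $l=2\dim_{\CC}B$) --- exactly the route the paper takes when it deduces Theorem \ref{thm3FactorHom2} from this statement. A smaller point: the conclusions concern the image $\psi(G)$ (this is how the theorem is used in the paper), so your closing attempt to ``transfer the structure back to $G$'' by asserting $G$ is virtually isomorphic to $\overline G$ is both unnecessary and unjustified, since $\ker\psi$ may be infinite (take $\psi$ trivial).
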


As a consequence one can obtain a constraint on K\"ahler subgroups of direct products of surface groups by imposing the evenness condition on the first Betti number for (1)-(5) in Theorem \ref{thm3FactorHom1}. Note that while groups of the form $\pi_1 (R)$, $\G_{h_1}\times \G_{h_2}$ and $\ZZ^2\times \G_h$ are K\"ahler the same turns out to not be true in general for coabelian subgroups of $\G_{h_1}\times \G_{h_2} \times \G_{h_3}$ of even rank. In fact many such subgroups are not even the image of a homomorphism from a K\"ahler group which is induced by a holomorphic map. As an application of Theorem \ref{thmMainTheorem} we can make this statement precise and thus prove Theorem \ref{thm3FactorHomVer} and Corollary \ref{thm3Factor}.

\begin{theorem}
\label{thm3FactorHom2}
Let $G=\pi_1 (X)$ be K\"ahler and let $\psi: G \to \G_{g_1}\times \G_{g_2} \times \G_{g_3}$ be a homomorphism such that the projections $p_i\circ \psi : G \to \G_{g_i}$ have finitely generated kernel for $1\leq i \leq r$ and the image is finitely presented. Then there is a finite index subgroup $\overline{G}_0\leq \overline{G}=\psi(G)$ with one of the following properties:
\begin{enumerate}
\item either $\overline{G}_0\cong \ZZ^k\times \G_{h_1}\times\dots \times \G_{h_s}$ with $0\leq k+s\leq 3$; or
\item there are finite index subgroups $\G_{\g_i}\leq \G_{g_i}$, an elliptic curve $E$ and branched holomorphic coverings $f_i : S_{\g_i}\rightarrow E$, $1\leq i \leq 3$, such that $\overline{G}_0\cong \pi_1 (H)\cong \ker(f_{\ast})$, where $H$ is the smooth generic fibre of the surjective holomorphic map $f=\sum_{i=1}^3 f_i$.
\end{enumerate}
Conversely, any group satisfying one of the Conditions (1) and (2) is the image of a homomorphism satisfying the above hypotheses.
\end{theorem}

\begin{proof}
 By Theorem \ref{thm3FactorHom1} it suffices to consider the case when $\overline{G}$ is virtually coabelian of even rank. Then there are finite index subgroups $\G_{\g_i}\leq \G_{g_i}$, $l\geq 0$ and an epimorphism $\phi : \G_{\g_1}\times \G_{\g_2}\times \G_{\g_3} \rightarrow \ZZ^{2l}$ such that $\overline{G}_0:= \ker \phi\leq G$ is a finite index subgroup and $\overline{G}_0\leq \G_{\g_1}\times\G_{\g_2}\times \G_{\g_3}$ is a finitely presented full subdirect product. We may further assume that $\overline{G}_0\leq \G_{\g_1}\times \G_{\g_2}\times \G_{\g_3}$ is a proper subgroup (if not then (1) holds with $k=0$ and $s=3$).
 
 Let $X_0\to X$ be the finite-sheeted holomorphic cover corresponding to the subgroup $\psi^{-1}(\overline{G}_0)\leq G$. Then $X_0$ is a compact K\"ahler manifold with $\psi(\pi_1 (X_0))=\overline{G}_0=\ker \phi$ and the projections $p_i\circ \psi|_{\pi_1(X_0)}:\pi_1 (X_0) \to \G_{\g_i}$ have finitely generated kernel. Proposition \ref{propSurjPairs} implies that $\psi|_{\pi_1 (X_0)}$ is induced by a holomorphic map $f: X_0\rightarrow S_{\g_1}\times S_{\g_2}\times S_{\g_3}$ with the property that $q_{i,j}\circ f: X_0 \rightarrow S_{\g_i}\times S_{\g_j}$ is a surjective holomorphic map for $1 \leq i < j \leq 3$. Hence, all assumptions of Theorem \ref{thmMainTheorem} are satisfied. It follows that $\overline{G}_0$ satisfies (2). The converse direction follows easily by taking quotients of K\"ahler groups of the form $\ZZ^{2s}\times \G_{h_1}\times \dots \times \G_{h_s}$ and from Theorem \ref{thmLlo16II}.
\end{proof}

\begin{proof}[Proof of Theorem \ref{thm3FactorHomVer}]
If in Theorem \ref{thm3FactorHom2} the group $\overline{G}$ is a full subdirect product then (1) can only hold if $\overline{G}_0\leq \G_{g_1}\times\G_{g_2}\times \G_{g_3}$ has finite index. Hence, we must be in case (2).
\end{proof}

\begin{proof}[Proof of Corollary \ref{thm3Factor}]
 This is a direct consequence of Theorem \ref{thm3FactorHom2} and the fact that K\"ahler groups are finitely presented and have even first Betti number. 
 
  Groups satisfying Considition (1) and having even first Betti number are clearly K\"ahler and $\pi_1 (H)$ in (2) is K\"ahler as fundamental group of $H$.
\end{proof}

\begin{remark}
Corollary \ref{thm3Factor} provides a classification of K\"ahler subgroups of direct products of three surface groups up to passing to finite index subgroups. Note that this statement can be made more precise in the cases corresponding to (1): when $k=0$ then finite extensions of these groups are K\"ahler if they are subdirect products of surface groups; and when $k=2$ then the group $G$ is either a finite index subgroup of a direct product $\ZZ^2 \times \G_{h'}$ with $h\geq h' \geq 2$ or $\cong \ZZ^2$. 
\end{remark}

The following example shows that it may be necessary to pass to finite index subgroups.

\begin{example}
 Let $\G_{g_1}\times \G_{g_2}$ be a direct product of surface groups. For $m\geq 2$ consider the canonical epimorphisms $\nu_i : H_1(\G_{g_i},\ZZ)\rightarrow \ZZ/m\ZZ$ obtained by mapping a basis of $H_1(\G_{g_i},\ZZ)$ to $1\in \ZZ/m\ZZ$. Denote by $\widehat{\nu}_i: \G_{g_1}\rightarrow \ZZ/m\ZZ$ the composition of $\nu_i$ with the abelianization map and define $\widehat{\nu}:= \nu_1 +\nu _2 : \G_{g_1}\times \G_{g_2} \rightarrow \ZZ /m\ZZ$. The finite index subgroup $\ker \widehat{\nu}\leq \G_{g_1}\times \G_{g_2}$ is K\"ahler and virtually a direct product $\ker \nu_1 \times \ker \nu_2$ of surface groups, but is not itself a direct product of surface groups.
\end{example}

\section{Complex hypersurfaces}
\label{secCxHyp}
 In this section we prove Theorem \ref{thmCxHyp}. We consider an embedded connected smooth complex hypersurface $\iota : X \hookrightarrow S_{g_1}\times \dots \times S_{g_r}$ in a direct product of closed Riemann surfaces of genus $g_i \geq 2$. Observe that we may assume that all projections $q_i \circ \iota : X \to S_{g_i}$ are non-constant. Indeed, if one of the projections $q_i\circ \iota :X \to S_{g_i}$ in Lemma \ref{lemSubd} is constant, say $q_r\circ \iota$, then we have $X=S_{g_1}\times \dots \times S_{g_{r-1}}$ a direct product of $r-1$ surfaces. Hence, we do not loose much by excluding this case.
 
 \begin{lemma}
  Let $r\geq 2$ and let $\iota_X : X \hookrightarrow S_{g_1}\times \dots \times S_{g_r}$ be a geometrically subdirect embedding of a connected smooth complex hypersurface in a direct product of closed Riemann surfaces. Then there is $2 \leq s \leq r$ such that $X= Y \times S_{g_{s+1}}\times \dots \times S_{g_r}$ with $\iota_Y: Y \hookrightarrow S_{g_{1}}\times \dots \times S_{g_s}$ an embedded smooth complex hypersurface which geometrically surjects onto $(s-1)$-tuples.
  \label{lemDecomp}
 \end{lemma}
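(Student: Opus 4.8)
The plan is to argue by induction on the number of factors $r$, splitting off one full surface factor at a time whenever $X$ fails to geometrically surject onto $(r-1)$-tuples. The base case $r=2$ is immediate: geometric subdirectness of $\iota_X$ is exactly the statement that $X$ geometrically surjects onto $1$-tuples, so one takes $s=2$ and $Y=X$. Throughout I use that, by Chow's theorem, $X$ is a smooth connected projective variety, hence irreducible, of complex dimension $r-1$, and that the coordinate projections restrict to morphisms on $X$.

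For the inductive step, if $X$ already geometrically surjects onto $(r-1)$-tuples we are done with $s=r$ and $Y=X$. Otherwise there is an index $i$ for which $q_{1,\dots,\widehat{\imath},\dots,r}|_X$ is not surjective; after reindexing the factors I may assume $i=r$ and write $\pi:=q_{1,\dots,r-1}|_X : X \to P:=S_{g_1}\times\dots\times S_{g_{r-1}}$. By the proper mapping theorem (or directly from projectivity) $W:=\pi(X)$ is a closed irreducible subvariety of $P$, and since $\pi$ is not surjective and $P$ is irreducible of dimension $r-1$, we get $\dim W\leq r-2$.

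The key step is to deduce the product decomposition $X=W\times S_{g_r}$. One inclusion is automatic: every point of $X$ projects into $W$ under $\pi$, so $X\subseteq W\times S_{g_r}$. For the reverse inclusion I invoke the theorem on the dimension of fibres: as $\pi:X\to W$ is a dominant morphism of irreducible varieties, every fibre $\pi^{-1}(w)$ with $w\in W$ has dimension at least $\dim X-\dim W\geq (r-1)-(r-2)=1$. But $\pi^{-1}(w)$ is a closed subvariety of the irreducible curve $\{w\}\times S_{g_r}$, so a fibre of positive dimension must be all of $\{w\}\times S_{g_r}$. Hence $\{w\}\times S_{g_r}\subseteq X$ for \emph{every} $w\in W$, giving $W\times S_{g_r}\subseteq X$, and combining the two inclusions yields $X=W\times S_{g_r}$. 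Comparing dimensions forces $\dim W=r-2$, so $W$ is a hypersurface in $P$; since $X$ is smooth and connected and $S_{g_r}$ is a connected smooth curve, $W$ is a connected smooth hypersurface, and $q_j(W)=q_j(X)=S_{g_j}$ for $1\leq j\leq r-1$ shows $\iota_W:W\hookrightarrow P$ is again geometrically subdirect. Applying the inductive hypothesis to $W$ produces $2\leq s\leq r-1$ and a decomposition $W=Y\times S_{g_{s+1}}\times\dots\times S_{g_{r-1}}$ with $Y$ geometrically surjecting onto $(s-1)$-tuples, whence $X=Y\times S_{g_{s+1}}\times\dots\times S_{g_r}$ as required.

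I expect the main obstacle to be precisely the key step: turning non-surjectivity of a "forget one factor" projection into an honest direct-product splitting of $X$. The crux is the \emph{uniform} lower bound on fibre dimension over the whole image $W$, not just over a generic point; this is what upgrades "the generic vertical curve lies in $X$" to "every vertical curve over $W$ lies in $X$," delivering the reverse inclusion $W\times S_{g_r}\subseteq X$ without any limiting or closure argument, and together with the trivial inclusion pins down $X=W\times S_{g_r}$ exactly.
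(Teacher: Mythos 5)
Your proof is correct, and it follows the same inductive skeleton as the paper's proof: induct on $r$, with base case $r=2$ given by geometric subdirectness, and in the inductive step split off the factor $S_{g_r}$ whenever the projection forgetting it fails to be surjective. The only genuine difference is how the decomposition $X=W\times S_{g_r}$ is established. The paper argues that the smooth \emph{generic} fibre of $q_{1,\dots,r-1}|_X\colon X\to \overline{X}$ is one-dimensional, hence equal to $S_{g_r}$, so that $\overline{X}^{\ast}\times S_{g_r}$ (over the locus $\overline{X}^{\ast}$ of non-singular values) is an open dense submanifold of $X$, and then concludes $X=\overline{X}\times S_{g_r}$ by taking closures. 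You instead invoke the theorem on fibre dimension for a dominant morphism of irreducible varieties, which gives the lower bound $\dim \pi^{-1}(w)\geq \dim X-\dim W\geq 1$ at \emph{every} point $w\in W$; since each fibre is a closed subvariety of the irreducible curve $\{w\}\times S_{g_r}$, each fibre is the whole curve, and the two inclusions $X\subseteq W\times S_{g_r}$ and $W\times S_{g_r}\subseteq X$ follow at once. This buys you a slightly cleaner argument: you never need to single out the non-singular locus of $\overline{X}$ or run a density/closure step, and the smoothness and connectedness of $W$, its geometric subdirectness, and the appeal to the inductive hypothesis then proceed exactly as in the paper.
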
 
 \begin{proof}
The result follows by induction on the number of factors $r\geq 2$. For $r=2$ the result holds due to the assumption that the embedding is geometrically subdirect. If $X$ does not geometrically surject onto $(r-1)$-tuples, then there is an $(r-1)$-tuple $1\leq i_1 < \dots < i_{r-1}\leq r$ such that the irreducible variety $\overline{X}=q_{i_1,\dots,i_{r-1}}(X)$ is $(r-2)$-dimensional; we may assume $i_j=j$. Hence, the smooth generic fibre of $q_{1,\dots, r-1}: X \to S_{g_1}\times \dots \times S_{g_{r-1}}$ is $1$-dimensional and therefore equal to $S_{g_r}$. Let $\overline{X}^{\ast}\subset \overline{X}$ be the locus of non-singular values. Then $\overline{X}^{\ast}\times S_{g_r} \subset X$ an open dense submanifold. It follows that $X= \overline{X} \times S_{g_r}$ with $\overline{X} \hookrightarrow S_{g_1}\times \dots \times S_{g_{r-1}}$ a connected smooth embedded hypersurface. Clearly $\overline{X}$ is geometrically subdirect. The result follows by induction.
 \end{proof}

 \begin{lemma}
  Let $r\geq 1$ and let $\iota : X \hookrightarrow S_{g_1}\times \dots \times S_{g_r}$ be a connected smooth complex hypersurface such that the projections $q_i \circ \iota : X \to S_{g_i}$ are non-trivial. Then there are finite regular covers $S_{h_i} \to S_{g_i}$, $1\leq i \leq r$ such that $\iota$ lifts to an embedding $j: X \hookrightarrow S_{h_1}\times \dots \times S_{h_r}$ with $i_{\ast}(\pi_1 (X)) \cong j_{\ast} (\pi_1 (X)) \leq \G_{h_1}\times \dots \times \G_{h_r}$ a subdirect product.
  \label{lemSubd}
 \end{lemma}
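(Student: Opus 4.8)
The plan is to pass, in each factor separately, to the finite cover of $S_{g_i}$ determined by the image of $\pi_1 X$ under the $i$-th projection; after taking these covers the hypersurface $X$ lifts, and the lifted image is subdirect essentially by construction. Write $\iota_{\ast}(\pi_1 X)=K\le \G_{g_1}\times\dots\times\G_{g_r}$ and $H_i:=p_i(K)=(q_i\circ\iota)_{\ast}(\pi_1 X)\le \G_{g_i}$. First I would record that each projection is surjective as a map of spaces: $X$ is compact and connected and $q_i\circ\iota$ is holomorphic and non-constant, so its image is a positive-dimensional irreducible analytic subvariety of the curve $S_{g_i}$ and hence is all of $S_{g_i}$.

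The heart of the argument is to show that $H_i$ is a \emph{finite index} subgroup of $\G_{g_i}$. For this I would use the Stein factorization $q_i\circ\iota=g_i\circ s_i$, where $s_i:X\to C_i$ is a proper surjective holomorphic map with connected fibres onto a smooth projective curve $C_i$ and $g_i:C_i\to S_{g_i}$ is a finite (branched) covering of degree $d_i$. A proper surjective holomorphic map with connected fibres induces a surjection on fundamental groups, so $(s_i)_{\ast}$ is onto; and a degree $d_i$ covering of curves has image of index $d_i$ on $\pi_1$. Hence $H_i=(g_i)_{\ast}(\pi_1 C_i)$ has index $d_i<\infty$ in $\G_{g_i}$, and in particular $H_i$ is itself a surface group, isomorphic to $\G_{h_i}$ for some $h_i$. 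I expect this step to be the main obstacle, since one must control the effect of the (finitely many) singular fibres of $s_i$ on $\pi_1$; concretely, $d_i$ is the number of connected components of a generic fibre of $q_i\circ\iota$, i.e. of a generic smooth hypersurface slice in $\prod_{j\ne i}S_{g_j}$.

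With finiteness of the index in hand I would let $\pi_i:S_{h_i}\to S_{g_i}$ be the finite unramified cover corresponding to $H_i\le\G_{g_i}$ and set $\pi=(\pi_1,\dots,\pi_r):S_{h_1}\times\dots\times S_{h_r}\to S_{g_1}\times\dots\times S_{g_r}$, so that $\pi_{\ast}$ identifies $\pi_1(S_{h_1}\times\dots\times S_{h_r})$ with $H_1\times\dots\times H_r\le \G_{g_1}\times\dots\times\G_{g_r}$. Since $K\le p_1(K)\times\dots\times p_r(K)=H_1\times\dots\times H_r$, the lifting criterion for covering spaces produces a lift $j:X\to S_{h_1}\times\dots\times S_{h_r}$ with $\pi\circ j=\iota$; as $\pi$ is a covering map and $\iota$ is an embedding of a compact manifold, $j$ is again an embedding. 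Finally, from $\pi_{\ast}\circ j_{\ast}=\iota_{\ast}$ and injectivity of $\pi_{\ast}$ I get $j_{\ast}(\pi_1 X)\cong \iota_{\ast}(\pi_1 X)=K$, while $p_i(j_{\ast}(\pi_1 X))=H_i=\pi_1 S_{h_i}$ for every $i$; that is, $j_{\ast}(\pi_1 X)$ is a subdirect product of $\G_{h_1}\times\dots\times\G_{h_r}$, as required. (The case $r=1$ is vacuous, since a connected hypersurface in a single curve is a point, forcing $q_1\circ\iota$ to be constant.)
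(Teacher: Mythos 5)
Your proof is correct and takes essentially the same approach as the paper: the paper likewise passes to the unramified covers corresponding to the images $(q_i\circ\iota)_{\ast}(\pi_1 X)\leq \G_{g_i}$, whose finite index the paper simply asserts as a consequence of the projections being proper holomorphic maps (the standard fact that you spell out via Stein factorization), and then lifts $\iota$ to an embedding into the product of covers exactly as you do. One minor inaccuracy that affects nothing: for a \emph{branched} covering $g_i : C_i \to S_{g_i}$ of degree $d_i$, the image $(g_i)_{\ast}(\pi_1 C_i)$ need not have index exactly $d_i$ in $\G_{g_i}$ (it only has index dividing $d_i$, as the branched cover may fail to factor through an unramified one of the same degree), but all your argument uses is finiteness of the index.
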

 \begin{proof}
  The projections $q_i\circ \iota :X\to S_{g_i}$ are proper holomorphic maps between compact K\"ahler manifolds. Thus, $\G_{h_i}:=(q_i \circ \iota)_{\ast}(\pi_1 (X))\leq \pi_1 (S_{g_i})$ is a finite index subgroup for $1\leq i \leq r$. Let $f_i: S_{h_i}\to S_{g_i}$ be the associated unramified coverings.  Then $\iota$ factors through a continuous map $j: X \to S_{h_1}\times \dots \times S_{h_r}$ making the diagram
  \[
   \xymatrix{ & S_{h_1}\times \dots \times S_{h_r} \ar[d]\\
             X\ar[ur]^{j} \ar[r]^{\iota} &  S_{g_1}\times \dots \times S_{g_r}\\
             }
  \]
  commutative. Since $\iota$ and the $f_i$ are holomorphic, the map $j$ defines a holomorphic embedding and by choice of the  $f_i$, the group $j_{\ast} (\pi_1 (X))\leq \G_{h_1}\times \dots \times \G_{h_r}$ is subdirect.
 \end{proof}

 We may in fact assume that the image $\iota _{\ast}(\pi_1 (X))\leq \G_{h_1}\times \dots \times \G_{h_r}$ full subdirect. 
 
 \begin{lemma}
  Let $r\geq 2$ and let $\iota : X\hookrightarrow S_{g_1}\times \dots \times S_{g_r}$ be an embedded connected smooth complex hypersurface such that $H:=\iota_{\ast}(\pi_1 (X))\leq \G_{g_1}\times \dots \times \G_{g_r}$ is a subdirect product. If $H$ is not full in $\G_{g_1}\times \dots \times \G_{g_r}$ then (after possibly reordering factors) $X$ is biholomorphic to $R_{\g} \times S_{g_3} \times \dots \times S_{g_r}$ with $j: R_{\g}\hookrightarrow S_{g_1}\times S_{g_2}$ an embedded Riemann surface such that $j_{\ast} (\pi_1 (R_{\g})) \cong \G_{g_2}$, the projection $R_{\g}\to S_{g_i}$, $i=1,2$, is a branched covering, and $\G_{g_1}\cap j_{\ast}(\pi_1 (R_{\g})) = \left\{ 1 \right\}$. 
  \label{lemFull}
 \end{lemma}
 \begin{proof}
After applying Lemma \ref{lemDecomp} and splitting off direct surface factors from $X$, we may assume that $X$ geometrically surjects onto $(r-1)$-tuples for $r\geq 2$. If $H$ is not full then there is a factor $\G_{g_i}$ with $\G_{g_i}\cap H = \left\{ 1 \right\}$, say $i=1$. Hence, the projection $q_{2,\dots, r}: S_{g_1}\times \dots \times S_{g_r}\to S_{g_2}\times \dots \times S_{g_r}$ induces an isomorphism $H\cong q_{2,\dots,r,\ast}(H)=:\overline{H} \leq \G_{g_2}\times \dots \times \G_{g_r}$. Since $X$ geometrically surjects onto $(r-1)$-tuples, the map $q_{2,\dots, r} : X \to S_{g_2}\times \dots \times S_{g_r}$ is surjective and hence finite-to-one. It follows that $\overline{H}\leq \G_{g_2}\times \dots \times \G_{g_r}$ is a finite index subgroup and thus a full subdirect product. 

The epimorphism $p_1 : H \to \G_{g_1}$ induces an epimorphism $\overline{p}_1 : \overline{H} \to \G_{g_1}$. By the universal property of full subdirect products of limit groups (see \cite[Theorem C(3)]{BriHowMilSho-13}) $\overline{p}_1$ is induced by a homomorphism $\G_{g_2}\times \dots \times \G_{g_r}\to \Gamma_{g_1}$ and thus factors through the projection $\G_{g_2}\times \dots \times \G_{g_r}\to \Gamma_{g_i}$ for some $2\leq i \leq r$ (else the image $\G_{g_1}$ would contain an element with non-cyclic centralizer), say $i=2$. It follows that the projection $H \to \G_{g_1}\times \G_{g_2}$ factors through the projection to $\Gamma_{g_2}$ and thus has image isomorphic to $\G_{g_2}$. However, this contradicts geometric surjection to $(r-1)$-tuples, unless $r=2$ (since as above $q_{1,\dots,r-1,\ast}(H)\leq \G_{g_1}\times \dots \times \G_{g_{r-1}}$ is a finite index subgroup).

This leaves us with the situation when $X=R_{\g}$ is a closed Riemann surface of genus $\g \geq 2$ with the property that $H=\iota_{\ast} (\pi_1 (X)) \cong \G_{g_2}$. Since $\iota_{\ast}(\pi_1 (X))$ is subdirect the projections onto factors induce finite-sheeted branched coverings $R_{\g}\to S_{g_i}$, $i=1,2$.
 \end{proof}

 \begin{proof}[Proof of Theorem \ref{thmCxHyp}]
  If $X$ is not geometrically subdirect then (2) holds. Hence, we can assume that $X$ is geometrically subdirect. By Lemma \ref{lemDecomp}, reduce to the case that $X=Y \times S_{g_{s+1}} \times \dots \times S_{g_r}$ with $j: Y \hookrightarrow S_{g_1}\times \dots \times S_{g_s}$ an embedded smooth complex hypersurface that geometrically surjects onto $(s-1)$-tuples. If $s=1$ then $Y$ is a point and we are in Case (2). If $s=2$ then $Y$ is a smooth Riemann surface and we are again in Case (2). Hence, we may assume that $s\geq 3$. By Lemma \ref{lemSubd} and Lemma \ref{lemFull} we may further assume that $j_{\ast} (\pi_1 (Y))\leq \pi_1 (S_{g_1})\times \dots \times \pi_1 (S_{g_s})$ is a full subdirect product. 
  
  Since $Y$ geometrically surjects onto $(s-1)$-tuples, the projections $q_{1,\dots,i-1,i+1,\dots,s}\circ j: Y \to S_{g_1}\times \dots \times S_{g_{i-1}}\times S_{g_{i+1}}\times \dots \times S_{g_s}$ are finite-to-one and therefore $(q_{1,\dots,i-1,i+1,\dots,s, \ast}\circ j)(\pi_1 (Y))\leq \G_{g_1}\times \dots \times \G_{g_{i-1}}\times \G_{g_{i+1}}\times \dots \times \G_{g_s}$ is a finite index subgroup for $1\leq i \leq s$. Hence, \cite[Corollary 3.6]{Kuc-14} implies that there are finite index subgroups $\G_{\g_i}\leq \G_{g_i}$ and an epimorphism $\phi : \G_{\g_1}\times \dots \times \G_{\g_s} \to \ZZ^k$ such that $H_0 := \ker \phi = H \cap (\G_{\g_1}\times \dots \times \G_{\g_s})\leq H$ is a finite index subgroup and the restriction of $\phi$ to every factor is surjective. Note that in particular, $H_0 \leq \G_{\g_1}\times \dots \times \G_{\g_s}$ is a full subdirect product.
  
  Denote by $Y_0 \to Y$ the finite-sheeted covering associated to the finite index subgroup $j_{\ast}^{-1}(H_0)\leq \pi_1 (Y)$. Then there is a holomorphic embedding $\iota: Y_0 \hookrightarrow S_{\g_1}\times \dots \times S_{\g_s}$ making the diagram
  \[
   \xymatrix{ Y_0 \ar[r]^{\iota} \ar[d] & S_{\g_1}\times \dots \times S_{\g_s}\ar[d] \\ 
   Y  \ar[r]^{j}& S_{g_1}\times \dots \times S_{g_s}\\}
  \]
commutative. By construction, we have $\iota_{\ast}(\pi_1 (Y_0)) = H_0$ and that $Y_0$ geometrically surjects onto $(s-1)$-tuples

If $H_0 \leq \G_{\g_1}\times \dots \times \G_{\g_s}$ is a finite index subgroup then we are in Case (1). Hence, we may assume that $H_0$ has infinite index. In particular $k\geq 1$ and all conditions of Theorem \ref{thmMainTheorem} are satisfied. Hence, there is an elliptic curve $E$ and branched covers $h_i : S_{\g_i}\to E$ such that $Y_0$ is equal to a fibre of the holomorphic map $h= \sum_{i=1}^s h_i : S_{\g_1}\times \dots \times S_{\g_s} \to E$.

The map $h$ has isolated singularities and all fibres are irreducible varieties by the proof of Theorem \ref{thmMainTheorem}. In particular, the map $h$ is a submersion in all but finitely many points. It follows that $h$ has reduced fibres and thus the fibres of $h$ over singular values are singular varieties and in particular can not be smooth manifolds (see e.g. \cite[pp. 13]{Mil-68}). Since $Y_0$ is a smooth subvariety of $S_{\g_1}\times \dots \times S_{\g_s}$ it follows that $Y_0$ is a smooth generic fibre of $h$.
 \end{proof}
 
\begin{remark}
We want to mention that Case (2) in Theorem \ref{thmCxHyp} splits into three cases (after reordering factors):
\begin{itemize}
\item[(i)] $X_0$ has trivial image in one factor, say $S_{\g_r}$, and thus $X_0=S_{\g_1}\times \dots \times S_{\g_{r-1}}$; 
\item[(ii)] $\iota_{\ast}(\pi_1 (X_0)) \leq \G_{g_1}\times \dots \times \G_{g_r}$ is not full. In this case the proof of Lemma \ref{lemFull} shows that $X_0=R_h \times S_{\g_3}\times \dots \times S_{\g_r}$ with $R_h\hookrightarrow S_{\g_1}\times S_{\g_2}$ an embedded curve and $\iota_{\ast} (\pi_1 (X_0)) \cong \G_{\g_2}\times \dots \times \G_{\g_r}$;
\item[(iii)] $s=2$, $X_0=R_h \times S_{\g_3}\times \dots \times S_{\g_r}$ with $R_h\hookrightarrow S_{\g_1}\times S_{\g_2}$ an embedded curve and $\iota_{\ast}(\pi_1 (X_0)) = \G_{\g_1}\times \dots \times \G_{\g_r}$ This happens for instance when $R_h$ is a generic hyperplane section of $S_{g_1}\times S_{g_2}$. Note that in this case $\iota_{\ast}$ is not injective and furthermore this is precisely the case when (1) and (2) both hold in Theorem \ref{thmCxHyp}.
\end{itemize}
\label{rmkProdCase}
\end{remark}

\begin{remark}
\label{remLefHypEx}
 In Case (1) of Theorem \ref{thmCxHyp} the epimorphism $\iota: \pi_1 (X_0) \to \G_{\g_1}\times \dots \times \G_{\g_r}$ is not necessarily injective. For instance $X_0$ can be as in Remark \ref{rmkProdCase}(iii). However, it can be an isomorphism: Take $X$ to be a smooth generic hyperplane section of $S_{g_1}\times \dots \times S_{g_r}$. If $r\geq 3$ the Lefschetz Hyperplane Theorem implies that $X \hookrightarrow S_{g_1}\times \dots \times S_{g_r}$ induces an isomorphism on fundamental groups.
\end{remark}

\begin{remark}
 In the light of Theorem \ref{thmCxHyp} it is natural to ask if one can also classify smooth subvarieties $X$ of codimension $k\geq 2$ in a direct product of Riemann surfaces $S_{g_1}\times \dots \times S_{g_r}$ in terms of their fundamental groups. The examples constructed in \cite{Llo-17} show that the class of fundamental groups of such subvarieties will be much larger. Furthermore the Lefschetz Hyperplane Theorem will allow us to realise any fundamental group of a smooth subvariety of codimension $l<k$ as fundamental group of a smooth subvariety of codimension $k$. These two observations show that any such classification will have to allow a much wider variety of fundamental groups. One observation that seems worth mentioning is that for $k< \frac{r}{2}$ the image of $\pi_1 (X)$ in $\G_{g_1}\times \dots \times \G_{g_r}$ has to be isomorphic to a virtually coabelian subgroup of even rank in a direct product of $\leq r$ surface groups (we might need to get rid of some factors and replace others by finite index subgroups). 
 
 To see this we first split off direct factors, using the same methods as above, to obtain a codimension $k$ subvariety $X_0$ in a product of $s\leq r$ surfaces which geometrically surjects onto $(r-s)$-tuples. Then we combine results of Kuckuck \cite{Kuc-14} with the fact that the inclusion $X_0 \hookrightarrow S_{g_1}\times \dots \times S_{g_s}$ is holomorphic and thus the images $q_{i_1,\dots,i_k,\ast} (\pi_1 (X)) \leq \G_{g_{i_1}}\times \dots \times \G_{g_{i_{s-k}}}$ are finite index subgroups for $1\leq i_1<\dots < i_{s-k}\leq s$ (see Sections 5 and 6 in \cite{Llo-17} for details, in particular Proposition 6.3).
\end{remark}

\section{Maps to $\ZZ^3$}
\label{secZ3}

Another situation in which we can give a complete answer to Delzant-Gromov's question is the case of coabelian subgroups of rank two. Our proof will make use of work of Bridson, Howie, Miller and Short \cite{BriHowMilSho-13}. 

\begin{theorem}[{\cite[Theorem D]{BriHowMilSho-13}}]
\label{thmBHMS}
 Let $H\leq \Lambda _1 \times \dots \times \Lambda _r$ be a finitely generated full subdirect product of non-abelian limit groups $\Lambda_i$, $1\leq i \leq r$. 
 
 Then $H$ is finitely presented if and only if $H$ virtually surjects onto pairs.
\end{theorem}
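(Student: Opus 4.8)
The plan is to prove the two implications separately and, for the substantial one, by induction on the number of factors $r$; the implication ``virtually surjects onto pairs $\Rightarrow$ finitely presented'' is by far the harder and forms the heart of the argument. Throughout I would use the following standard facts about limit groups: every finitely generated subgroup of a limit group is again a limit group; limit groups are coherent and of type $\mathcal{F}_\infty$; and finite direct products of limit groups are finitely presented. These settle the base case $r\le 2$ at once. For $r=2$, virtual surjection onto the pair $\{1,2\}$ is the same as $H$ having finite index in $\Lambda_1\times\Lambda_2$, which is finitely presented, so $H$ is; conversely, a finitely presented full subdirect product of two non-abelian limit groups must have finite index, since a finitely generated full subdirect product of infinite index fails to be of type $\mathcal{F}_2$ (the Bieri--Stallings phenomenon, in its form for limit groups).

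For the forward direction (finitely presented $\Rightarrow$ VSP) I would argue by contradiction: if $H$ fails to virtually surject onto some pair $\{i,j\}$, then $p_{i,j}(H)\le\Lambda_i\times\Lambda_j$ is a finitely generated full subdirect product of infinite index, hence not of type $\mathcal{F}_2$ by the two-factor dichotomy above. The task is then to propagate this failure back up to $H$ itself, which I would handle by an extension argument controlling the kernel $H\cap\ker(p_{i,j})$ via coherence of the remaining factors. The delicate point is that a quotient of a finitely presented group need be finitely presented only when the kernel is normally finitely generated, so this step genuinely needs the structural input coming from fullness and the other coordinates, rather than a naive quotient argument.

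The main work is the reverse direction. Assuming $H$ virtually surjects onto pairs, I would induct on $r$. Write $\Gamma:=p_{1,\dots,r-1}(H)\le\Lambda_1\times\dots\times\Lambda_{r-1}$ and $L:=H\cap\Lambda_r$. One first checks that $\Gamma$ is again a finitely generated full subdirect product that virtually surjects onto pairs, so by induction $\Gamma$ is finitely presented; that $p_r(H)\le\Lambda_r$ is a limit group, hence of type $\mathcal{F}_\infty$; and, crucially, that $L$ is finitely generated, a point that already uses VSP. Setting $Q:=p_r(H)/L$, there is a homomorphism $\theta\colon\Gamma\to Q$ induced by $h\mapsto p_r(h)L$, and $H$ is precisely the fibre product
\[
 H\;\cong\;\{(\gamma,\lambda)\in\Gamma\times p_r(H)\;:\;\theta(\gamma)=\lambda L\}
\]
over $Q$. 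I would then invoke the asymmetric $1$--$2$--$3$ Theorem applied to the extension $1\to L\to p_r(H)\to Q\to 1$ and the surjection $\Gamma\to Q$: with finitely generated fibre $L$ and finitely presented middle groups $p_r(H)$ and $\Gamma$, the fibre product $H$ is finitely presented provided $Q$ is of type $\mathcal{F}_3$.

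The hard part, and the step I expect to be the main obstacle, is verifying that $Q$ has type $\mathcal{F}_3$. This is exactly where virtual surjection onto pairs must be used in full force: for each $j<r$ the projection $p_{j,r}(H)$ has finite index in $\Lambda_j\times\Lambda_r$, and I would use this, together with the fact that finitely generated subgroups of limit groups are limit groups (hence of type $\mathcal{F}_\infty$), to identify $Q$ up to commensurability with a limit-group-like quotient and thereby establish the required finiteness. Arranging this $\mathcal{F}_3$ property to hold uniformly through the induction---rather than only the $\mathcal{F}_2$ statement directly available---is the genuine technical core of the theorem; once it is in place, the $1$--$2$--$3$ Theorem delivers finite presentability of $H$ and closes the induction.
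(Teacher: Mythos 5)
The first thing to note is that the paper you are being compared against does not prove this statement at all: it is imported verbatim as Theorem D of \cite{BriHowMilSho-13} and used as a black box, so your sketch can only be measured against the proof in that cited source. Your overall architecture for the hard direction (induction on $r$, writing $H$ as the fibre product of $\Gamma=p_{1,\dots,r-1}(H)$ and $\Lambda_r$ over $Q$, and invoking the asymmetric $1$--$2$--$3$ theorem) is indeed the strategy used there. But your inductive step contains a false claim: virtual surjection onto pairs does \emph{not} imply that $L=H\cap\Lambda_r$ is finitely generated. Counterexample: the Stallings--Bieri group $H=\ker(\phi_1+\phi_2+\phi_3\colon F_2\times F_2\times F_2\to\ZZ)$, with each $\phi_i$ an epimorphism, is a finitely presented full subdirect product which surjects (not just virtually) onto pairs, yet $H\cap\Lambda_3=\ker\phi_3$ is free of infinite rank. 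Since your application of the asymmetric $1$--$2$--$3$ theorem places the finite-generation hypothesis exactly on the kernel of $p_r(H)=\Lambda_r\to Q$, i.e.\ on $L$, the argument breaks at this point. The hypothesis must instead be placed on the kernel of $\theta\colon\Gamma\to Q$, which equals $H\cap(\Lambda_1\times\dots\times\Lambda_{r-1})$ (in the example above this is $\ker(\phi_1+\phi_2)\leq F_2\times F_2$, which \emph{is} finitely generated); proving that VSP forces this kernel to be finitely generated is a genuine technical step, present in \cite{BriHowMilSho-13} and absent from your sketch.

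There are two further gaps. First, the $\mathcal{F}_3$ property of $Q$: your plan to identify $Q$ ``up to commensurability with a limit-group-like quotient'' cannot work, because $Q=\Lambda_r/L$ is a quotient of a limit group by an (in general) infinitely generated normal subgroup and need not be commensurable with any limit group. What is true, and what the actual proof establishes, is that VSP makes $Q$ virtually nilpotent: for each $j<r$ the subgroup $M_j=p_{j,r}(H)\cap\Lambda_r$ has finite index in $\Lambda_r$, and iterated commutators of elements of $M_1,\dots,M_{r-1}$ lie in $L$, so a finite-index subgroup of $Q$ is nilpotent of class at most $r-2$; finitely generated virtually nilpotent groups are of type $\mathcal{F}_\infty$, which supplies $\mathcal{F}_3$. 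Second, the direction ``finitely presented $\Rightarrow$ VSP'' is not actually argued in your proposal: you correctly observe that $p_{i,j}(H)$ is merely a quotient of $H$, so finite presentability does not pass to it, but the proposed ``extension argument controlling $H\cap\ker p_{i,j}$ via coherence'' is a placeholder rather than an argument---this implication is a substantial theorem in its own right in \cite{BriHowMilSho-13}, resting on their result that the quotients $\Lambda_i/(H\cap\Lambda_i)$ are virtually nilpotent whenever $H$ is finitely presented. Relatedly, your base case for $r=2$ (a finitely presented full subdirect product of two non-abelian limit groups has finite index) is itself a deep theorem of the same authors, the limit-group analogue of Baumslag--Roseblade; it is legitimate to quote it, but it is not a soft ``Bieri--Stallings phenomenon'' and should be flagged as the nontrivial input it is.
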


\begin{theorem}
\label{thmCoabRk2}
Let $X$ be compact K\"ahler, let $G=\pi_1 (X)$ and let $\phi : G \rightarrow \G_{g_1}\times \dots \times \G_{g_r}$ be a homomorphism with finitely presented full subdirect image which is induced by a holomorphic map $f: X \rightarrow S_{g_1}\times \dots \times S_{g_r}$. Assume that there is an epimorphism $\psi : \G_{g_1}\times \dots \times \G_{g_r}\rightarrow \ZZ^2$ such that $\ker \psi =\phi(G)$.

Then (after possibly reordering factors) there is $s\geq 3$, an elliptic curve $E$ and branched covering maps $f_i : S_{g_i}\rightarrow E$, $1\leq i \leq s$, such that $\phi(G) = \pi_1 (H) \times \G_{g_{s+1}} \times \dots \times \G_{g_r}$, where $H$ is the connected smooth generic fibre of the holomorphic map $f=\sum_{i=1}^s : S_{g_1}\times \dots \times S_{g_s} \rightarrow E$, $f_{\ast}=\psi|_{\G_{g_1}\times \dots \times \G_{g_s}}$, and $\psi|_{\G_{g_i}}$ trivial for $i\geq s+1$.
\end{theorem}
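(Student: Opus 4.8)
The plan is to peel off the factors on which $\psi$ is trivial and reduce to a situation where Theorem \ref{thmMainTheorem} applies directly. First I would set $S=\{i : \psi|_{\G_{g_i}}\neq 0\}$ and reorder so that $S=\{1,\dots,s\}$. For $i\notin S$ the factor $\G_{g_i}$ lies in $\ker\psi=\phi(G)$; since $\ZZ^2$ is abelian, $\psi$ factors through the projection $p_{1,\dots,s}$, so with $\psi_S:=\psi|_{\G_{g_1}\times\dots\times\G_{g_s}}$ one gets
\[
\phi(G)=\ker\psi=\big(\ker\psi_S\big)\times\G_{g_{s+1}}\times\dots\times\G_{g_r},
\]
and $\psi_S$ is still surjective onto $\ZZ^2$. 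Writing $K:=\ker\psi_S$, the composition $\phi_S:=p_{1,\dots,s}\circ\phi$ has image $K$ and is realised by the holomorphic map $f_S:=q_{1,\dots,s}\circ f$. Since $\phi(G)$ is a full subdirect product, so is $K\leq\G_{g_1}\times\dots\times\G_{g_s}$, and as a retract of the finitely presented group $\phi(G)$ the group $K$ is itself finitely presented; it is proper because $\G_{g_1}\times\dots\times\G_{g_s}/K\cong\ZZ^2$ is infinite.

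Next I would show $s\geq 3$. The case $s=0$ is impossible since then $\psi$ is trivial. If $s=1$ then subdirectness forces $K=p_1(K)=\G_{g_1}$, contradicting properness. If $s=2$ then $K$ is a finitely presented full subdirect product of the two non-abelian limit groups $\G_{g_1},\G_{g_2}$, so by Theorem \ref{thmBHMS} it virtually surjects onto pairs; for two factors this means $K$ has finite index, again contradicting properness. Hence $s\geq 3$, and now $K$ is a proper coabelian finitely presented full subdirect product of $s\geq 3$ surface groups realised by $f_S$.

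It remains to verify the last hypothesis of Theorem \ref{thmMainTheorem}, namely that $q_{i_1,\dots,i_{s-1}}\circ f_S$ is surjective for every $(s-1)$-tuple. By Theorem \ref{thmBHMS} the finitely presented group $K$ virtually surjects onto $(s-1)$-tuples, so each $p_{i_1,\dots,i_{s-1}}(K)$ is a finite index subgroup; the point is to upgrade this virtual surjectivity on fundamental groups to geometric surjectivity of the proper holomorphic map $f_S$. This is the main obstacle, and I would handle it as in the pair case underlying Proposition \ref{propSurjPairs}: the image of $q_{i_1,\dots,i_{s-1}}\circ f_S$ is an irreducible subvariety whose fundamental group maps onto a finite index subgroup of a product of $\geq 2$ surface groups, so by the holomorphic arguments of \cite[Sections 5--6]{Llo-17} it cannot be a proper subvariety. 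With this in hand all hypotheses of Theorem \ref{thmMainTheorem} are met for $\phi_S$.

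Applying Theorem \ref{thmMainTheorem} to $\phi_S$ yields an elliptic curve $E$ and branched covers $f_i:S_{g_i}\to E$, $1\leq i\leq s$, with $K=\pi_1 H$ for $H$ the connected smooth generic fibre of $f:=\sum_{i=1}^s f_i$, together with a short exact sequence
\[
1\to\pi_1 H\to\G_{g_1}\times\dots\times\G_{g_s}\xrightarrow{f_\ast}\pi_1 E\to 1.
\]
Finally I would identify $f_\ast$ with $\psi_S$: both are epimorphisms onto $\ZZ^2\cong\pi_1 E$ with kernel $K$, so they differ by an automorphism of $\ZZ^2$, which I absorb into the identification $\pi_1 E\cong\ZZ^2$. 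Combining this with the splitting from the first step gives $\phi(G)=\pi_1 H\times\G_{g_{s+1}}\times\dots\times\G_{g_r}$ with $f_\ast=\psi|_{\G_{g_1}\times\dots\times\G_{g_s}}$ and $\psi|_{\G_{g_i}}$ trivial for $i\geq s+1$, as required.
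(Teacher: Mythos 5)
Your splitting step and your proof that $s\geq 3$ are both correct and agree with the paper's own argument. The proposal breaks down, however, at the step where you verify the last hypothesis of Theorem \ref{thmMainTheorem}. You claim that Theorem \ref{thmBHMS} makes the finitely presented group $K=\ker\psi_S$ virtually surject onto $(s-1)$-tuples; Theorem \ref{thmBHMS} only gives virtual surjection onto \emph{pairs}, and for $s\geq 4$ that is strictly weaker for exactly this class of groups. Writing $\psi_i:=\psi_S|_{\G_{g_i}}$ and using surjectivity of $\psi_S$, the projection of $K$ onto the $(s-1)$-tuple omitting the $j$-th factor has index $[\ZZ^2:\mathrm{im}\,\psi_j]$, while $p_{i,j}(K)$ has index $[\ZZ^2:\sum_{k\neq i,j}\mathrm{im}\,\psi_k]$. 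So virtual surjection onto $(s-1)$-tuples is equivalent to \emph{every} $\psi_j$ having finite-index image, which does not follow from the pairwise condition: for $s=4$ take $\psi_1,\dots,\psi_4$ with images $\ZZ(1,0)$, $\ZZ(0,1)$, $\ZZ(1,1)$, $\ZZ(1,-1)$. Any two of these lines span a finite-index subgroup of $\ZZ^2$, so $K$ virtually surjects onto pairs and is finitely presented by Theorem \ref{thmBHMS} (finite generation follows from Bieri--Neumann--Strebel theory, since every character vanishing on $K$ is non-trivial on at least three factors); yet $p_{1,2,3}(K)$ has infinite index, namely $[\ZZ^2:\ZZ(1,-1)]$. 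Configurations of this kind are precisely what Theorem \ref{thmCoabRk2} has to exclude --- its conclusion forces each $\psi_i=(f_i)_{\ast}$ to have finite-index image because $f_i$ is a branched cover --- so they cannot be ruled out by quoting abstract finiteness properties: the implication you invoke is false, and using the conclusion to dismiss such $K$ would be circular.

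There is a second, independent gap in the same step: even granted finite-index image on $(s-1)$-tuples at the group level, the upgrade to surjectivity of the holomorphic map $q_{i_1,\dots,i_{s-1}}\circ f_S$ is not covered by Proposition \ref{propSurjPairs}, whose hypotheses (finitely generated kernels of the projections $p_i\circ\phi$) are not assumed in Theorem \ref{thmCoabRk2}; and finite-index $\pi_1$-image alone never forces geometric surjectivity, since an embedded curve in $S_{g_1}\times S_{g_2}$ can have surjective $\pi_1$-image (cf.\ Remark \ref{rmkProdCase}(iii)). This is exactly why the paper does not route the proof through Theorem \ref{thmMainTheorem} at all. Instead it reruns only the Albanese part of that proof, which needs no tuple-surjectivity hypothesis: coabelianity of $\phi(G)$ together with Lemma \ref{lemTor} forces $\mathrm{rk}_{\ZZ}\pi_1 B=2$ for $B=(A_1\times\dots\times A_r)/\overline{f}(A(X))$, so $B$ is an elliptic curve and $\psi$ is induced by the holomorphic map $h:S_{g_1}\times\dots\times S_{g_r}\to B$; each restriction $h|_{S_{g_i}}$ is then either constant or a branched cover, the constant factors split off (your set $S$), $s\geq 3$ follows exactly as in your argument, and Theorem \ref{thmLlo16II}, whose hypotheses are now immediate, gives the conclusion. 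To repair your outline you would have to replace your step 4 by this Albanese computation, at which point Theorem \ref{thmMainTheorem} is no longer needed.
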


\begin{proof}
 With the same notation as in the proof of Theorem \ref{thmMainTheorem} consider the commutative diagram
 \[
   \xymatrix{	X \ar[r]^{f} \ar[d]_{a _X} & S_{g_1}\times \dots \times S_{g_r} \ar[d]_{(a_1,\dots,a_r)} \ar[rd]^h & \\
  				A(X) \ar[r]^{\overline{f}} & A_1 \times \dots \times A_r \ar[r] &B.}
 \]
 
 Arguing as in the proof of Theorem \ref{thmMainTheorem} (see Diagram \eqref{eqn:diag3} and subsequent discussion) we obtain that $\mathrm{rk}_{\ZZ} \pi_1 (B)= 2$ and that the map $\psi$ is induced by the holomorphic map $h: S_{g_1}\times \dots \times S_{g_r}\rightarrow B$. Since the restriction $h|_{S_{g_i}}: S_{g_i} \rightarrow B$ is a holomorphic map, it is either surjective or $h(S_{g_i})$ is a point. 
 
 A surjective holomorphic map between closed Riemann surfaces is a branched covering. Hence, there is $1\leq s \leq r$ such that (after reordering factors) 
 \begin{itemize}
  \item $h:S_{g_i}\to  B$ is a branched holomorphic covering  for $1\leq i \leq s$;
  \item $h(S_{g_i})$ is a point for $s+1 \leq i \leq r$.
 \end{itemize}
It follows that 
\begin{align*}
\phi(G) &  = \ker h_{\ast}= \ker \left(\left(h|_{S_{g_1}\times \dots \times S{g_{s}}}\right)_{\ast}\right) \times \G_{g_{s+1}}\times \dots\times \G_{g_r}\\
&= \ker \psi = \ker \left(\psi|_{\G_{g_1}\times \dots \times \G_{g_s}}\right) \times \G_{g_{s+1}}\times \dots \times \G_{g_r}.
\end{align*}

Thus, the group $\phi(G)$ is an extension 
\[
 1 \rightarrow \G_{g_{s+1}}\times \dots \times \G_{g_r} \rightarrow \phi(G) \rightarrow \ker \left(\psi|_{\G_{g_1}\times \dots \times \G_{g_s}}\right)=\ker \left(h|_{S_{g_1}\times \dots \times S{g_{s}}}\right)_{\ast} \rightarrow 1.
\]
By \cite[Proposition 2.7]{Bie-81}, finite presentability of $\phi(G)$ implies finite presentability of the full subdirect product $\ker \left(\psi|_{\G_{g_1}\times \dots \times \G_{g_s}}\right)\leq \G_{g_1}\times \dots \times \G_{g_s}$.

If $s=1$ then being a full subdirect product implies that $\ker \left(\psi|_{\G_{g_1}\times \dots \times \G_{g_s}}\right)= \G_{g_1}$ and if $s=2$ then Theorem \ref{thmBHMS} implies that the group $\ker \left(\psi|_{\G_{g_1} \times \G_{g_2}}\right)\leq \G_{g_1}\times \G_{g_2}$ is a finite index subgroup. However, $\psi$ is an epimorphism onto the infinite group $\ZZ^2$. It follows that $s\geq 3$. 

Hence, the restriction $h|_{{S_{g_1}}\times \dots \times S_{g_s}}$ satisfies all conditions of Theorem \ref{thmLlo16II} and we obtain that $\ker \left(\psi|_{\G_{g_1}\times \dots \times \G_{g_s}}\right)= \pi_1 (H)$ for $H$ the smooth generic fibre of the restriction $h|_{S_{g_1}\times \dots \times S{g_{s}}}$. Thus, $\phi(G) = \pi_1 (H) \times \G_{g_{s+1}}\times \dots \times \G_{g_r}$.
\end{proof}

As a consequence of Theorem \ref{thmCoabRk2}, we can now classify all K\"ahler subgroups arising as kernels of homomorphisms from a direct product of surface groups to $\ZZ^3$. For this we will require the following result from \cite{Llo-17}:

\begin{theorem}[{\cite[Corollary 1.5]{Llo-17}}]
\label{thmEpiZZ}
 Let $k\geq 0$ and let $g_1,\dots, g_r\geq 2$. If $\phi : \G_{g_1}\times \dots \times \G_{g_r} \rightarrow \ZZ^{2k+1}$ is surjective homomorphism then $\ker \phi$ is not K\"ahler.
\end{theorem}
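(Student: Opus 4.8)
The plan is to assume that $K=\ker\phi$ is K\"ahler and derive a contradiction from the \emph{parity} of its corank, exploiting the Albanese construction of the proof of Theorem \ref{thmMainTheorem}. Write $K=\pi_1 X$ for a compact K\"ahler manifold $X$, set $Q:=\G_{g_1}\times\dots\times\G_{g_r}=\pi_1(S_{g_1}\times\dots\times S_{g_r})$, and let $\iota:K\hookrightarrow Q$ be the inclusion; by hypothesis $K$ is coabelian with $Q/K\cong\ZZ^{2k+1}$, i.e. of corank $2k+1$. The mechanism I would isolate from the proof of Theorem \ref{thmMainTheorem} is that the torus $B=(A_1\times\dots\times A_r)/\overline{f}(A(X))$ of diagram \eqref{eqnAlb} is a \emph{complex} torus, so that $\mathrm{rk}_{\ZZ}\pi_1 B=2\,\mathrm{dim}_{\CC}B$ is automatically even; the rank computation at the end of that proof identifies $\mathrm{rk}_{\ZZ}\pi_1 B$ with the corank $2k+1$, and the parity clash is the contradiction.

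The only serious step, and the place where the K\"ahler hypothesis is genuinely used, is to realise $\iota$ by a \emph{holomorphic} map $f:X\to S_{g_1}\times\dots\times S_{g_r}$. This is crucial: holomorphicity of $f$ is exactly what forces the induced map $\overline{f}:A(X)\to A_1\times\dots\times A_r$ on Albanese tori to be a morphism of complex tori, so that $\overline{f}(A(X))$ is a complex subtorus and $B$ is a complex torus (for a merely smooth $f$ the image would only be a real subtorus and the corank could be odd). To produce $f$ I would first reduce to the setting of Proposition \ref{propSurjPairs}. Since $K$ is K\"ahler it is finitely presented; after passing to the finite covers $S_{h_i}\to S_{g_i}$ associated with the finite index subgroups $p_i(K)\leq\G_{g_i}$, splitting off the factors on which $\iota$ has trivial or virtually cyclic projection (as in Lemmas \ref{lemDecomp} and \ref{lemFull}), and replacing $\phi$ by its restriction to $\prod\G_{\g_i}$, one reaches a finitely presented full subdirect image with finitely generated projection kernels. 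Each of these reductions preserves the corank, since the restriction of $\phi$ to a finite index subgroup still surjects onto a finite index (hence rank $2k+1$) subgroup of $\ZZ^{2k+1}$, and splitting off a factor on which $\phi$ vanishes leaves $Q/K$ unchanged. Proposition \ref{propSurjPairs} then supplies the holomorphic realisation $f$.

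With $f$ holomorphic I would form the Albanese diagram \eqref{eqnAlb} and transcribe the final rank computation of the proof of Theorem \ref{thmMainTheorem}. Coabelianity gives, by right-exactness of abelianisation, diagram \eqref{eqn:diag2}, so the image of $K_{ab}=(\pi_1 X)_{ab}$ in $Q_{ab}$ has corank $2k+1$; diagram \eqref{eqn:diag3} identifies this image with $\overline{f}_{\ast}(\pi_1 A(X))$. Combining this with Lemma \ref{lemTor}, which gives $\mathrm{rk}_{\ZZ}\overline{f}_{\ast}(\pi_1 A(X))=\mathrm{rk}_{\ZZ}\pi_1(\overline{f}(A(X)))=2\,\mathrm{dim}_{\CC}\overline{f}(A(X))$, one obtains
\[
\mathrm{rk}_{\ZZ}\pi_1 B = 2\,\mathrm{dim}_{\CC}B = 2\,\mathrm{dim}_{\CC}(A_1\times\dots\times A_r)-2\,\mathrm{dim}_{\CC}\overline{f}(A(X)) = \mathrm{rk}_{\ZZ}Q_{ab}-\mathrm{rk}_{\ZZ}\overline{f}_{\ast}(\pi_1 A(X)) = 2k+1.
\]
The left-hand side is even because $B$ is a complex torus, whereas the right-hand side is odd; this contradiction shows $K$ is not K\"ahler.

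The main obstacle is the holomorphic realisation, not the parity computation, which is a direct copy of the relevant portion of the proof of Theorem \ref{thmMainTheorem}. Two points require care: checking that the standard reductions genuinely bring a \emph{general} surjection $\phi$ into the full subdirect, finitely generated projection kernel situation where Proposition \ref{propSurjPairs} applies, and verifying that none of them changes the parity of the corank. A convenient alternative, sidestepping the precise hypotheses of Proposition \ref{propSurjPairs}, is to realise each non-degenerate projection $p_i\circ\iota:\pi_1 X\to\G_{g_i}$ individually by a holomorphic map $X\to S_{g_i}$, using the classical factorisation of large homomorphisms from K\"ahler groups to hyperbolic surface groups through holomorphic fibrations, and to assemble these coordinatewise into $f$, the virtually cyclic projections being absorbed by the splitting lemmas. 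Either way, once $f$ is holomorphic the even rank of $\pi_1 B$ forced by its complex structure closes the argument.
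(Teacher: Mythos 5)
First, on status: this paper never proves Theorem \ref{thmEpiZZ} internally --- it is quoted from \cite[Corollary 1.5]{Llo-17} --- so the only internal benchmark is the proof of the rank-two sibling, Theorem \ref{thmCoabRk2}, together with the rank computation at the end of the proof of Theorem \ref{thmMainTheorem}. Measured against that benchmark, your central mechanism is exactly right: realise the inclusion $K=\ker\phi\hookrightarrow \G_{g_1}\times\dots\times\G_{g_r}$ by a holomorphic map, form diagram \eqref{eqnAlb}, and read off $\mathrm{rk}_{\ZZ}\pi_1 B=l$ from diagrams \eqref{eqn:diag2} and \eqref{eqn:diag3} plus Lemma \ref{lemTor}; since $B$ is a complex torus this rank is even, contradicting $l=2k+1$. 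You are also correct that this part needs none of the surjectivity-onto-$(r-1)$-tuples hypotheses of Theorem \ref{thmMainTheorem}, and your check that the reductions preserve the corank (a finite index subgroup of $\ZZ^{2k+1}$ still has rank $2k+1$) is sound.

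The genuine gap is the reduction into Proposition \ref{propSurjPairs}, which you yourself flag as ``the only serious step'' but do not carry out, and part of what you sketch uses the wrong tools. (i) You assert that $p_i(K)\leq\G_{g_i}$ has finite index; this is true but needs proof: $K$ is normal in the ambient product and contains the commutator subgroup (the quotient $\ZZ^{2k+1}$ is abelian), so $p_i(K)$ is a nontrivial normal subgroup of $\G_{g_i}$, finitely generated because $K$ is (K\"ahler groups are finitely presented), and a nontrivial finitely generated normal subgroup of a surface group has finite index (Greenberg). (ii) For the same reason $K\cap\G_{g_i}\supseteq[\G_{g_i},\G_{g_i}]\neq 1$, so fullness is \emph{automatic}: Lemmas \ref{lemDecomp} and \ref{lemFull} concern embedded hypersurfaces, not abstract kernels, and the ``trivial or virtually cyclic projections'' you propose to split off cannot occur here. (iii) Most seriously, Proposition \ref{propSurjPairs} requires each $\ker(p_i|_K)=K\cap\prod_{j\neq i}\G_{g_j}$ to be finitely generated, and this is \emph{not} automatic for a coabelian kernel: there are full subdirect kernels of epimorphisms from three surface groups to $\ZZ^2$ (take the three restrictions to have pairwise independent rank-one images) whose projection kernels are infinitely generated. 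What rescues the argument is that such examples are never finitely presented: since $K$ is K\"ahler it is finitely presented, so Theorem \ref{thmBHMS} gives virtual surjection onto pairs, which in the coabelian setting says that the images of $\phi$ restricted to any $r-2$ of the factors still have full rank, and a Bieri--Stallings/BNS-type computation in the product then yields finite generation of each $\ker(p_i|_K)$; the same VSP conclusion disposes of $r\leq 2$ outright (finite index would contradict the infinite quotient $\ZZ^{2k+1}$). Your proposed alternative via Siu--Beauville factorisation does not sidestep this, since pinning the fibration genus down to exactly $g_i$ again requires finite generation of $\ker(p_i|_K)$. To be fair, the paper's own proof of Theorem \ref{thmCoabRk2} invokes Proposition \ref{propSurjPairs} just as tersely, presumably leaning on \cite[Remark 9.4]{Llo-17}; but in a self-contained proof the verifications (i)--(iii) are the real content, and as written your argument assumes precisely the hypotheses it needs.
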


\begin{theorem}
\label{corCoabRk2}
 Let $r\geq 1$, let $\phi: \G_{g_1}\times \dots \times \G_{g_r} \rightarrow \ZZ^3$ be a homomorphism and let $G=\ker \phi\leq \G_{g_1}\times \dots \times \G_{g_r}$. Then the following are equivalent:
 \begin{enumerate}
  \item $G$ is K\"ahler;
  \item either $G= \G_{g_1}\times \dots \times \G_{g_r}$, or there is $r\geq s\geq 3$, an elliptic curve $E$, and surjective holomorphic maps $f_i : S_{g_i}\rightarrow E$, $1\leq i \leq s$, such that $G = \pi_1 (H) \times \G_{g_{s+1}} \times \dots \times \G_{g_r}$ (after possibly reordering factors), where $H$ is the connected smooth generic fibre of the holomorphic map $f=\sum_{i=1}^s f_i : S_{g_1}\times \dots \times S_{g_s} \rightarrow E$, $f_{\ast}=\phi|_{\G_{g_1}\times \dots \times \G_{g_s}}: \G_{g_1}\times \dots \times \G_{g_s}\rightarrow \pi_1 (E)\cong \phi(\G_{g_1}\times \dots \times \G_{g_s}) $, and $\phi|_{\G_{g_i}}$ is trivial for $i\geq s+1$.
 \end{enumerate}
\end{theorem}

Theorem \ref{corCoabRk2} shows in particular that the image of $\phi$ is either trivial or isomorphic to $\ZZ^2$. 

\begin{proof}[Proof of Theorem \ref{corCoabRk2}]
By Theorem \ref{thmLlo16II} (2) implies (1). Assume that $G$ is K\"ahler. If $\phi$ is trivial then $G=\G_{g_1}\times \dots \times \G_{g_r}$ is K\"ahler and if $\mathrm{im}(\phi)\leq \ZZ^3$ has odd rank then, by Theorem \ref{thmEpiZZ}, $G$ is not K\"ahler. Thus, we may assume that $G$ is a finitely presented full subdirect product which is the kernel of an epimorphism $\phi: \G_{g_1}\times \dots \times \G_{g_r}\to \ZZ^2= \mathrm{im}(\phi)$. Let $X$ be a compact K\"ahler manifold with $G=\pi_1 (X)$. Then Proposition \ref{propSurjPairs} implies that $\phi$ is induced by a holomorphic map $f: X \rightarrow S_{g_1}\times \dots \times S_{g_r}$. Hence, all conditions of Theorem \ref{thmCoabRk2} are satisfied and we obtain (2).
\end{proof}

\bibliography{References}
\bibliographystyle{amsplain}

\end{document}